%------------------------------------------------------------------------------
% Beginning of journal.tex
%------------------------------------------------------------------------------
%
% AMS-LaTeX version 2 sample file for journals, based on amsart.cls.
%
%        ***     DO NOT USE THIS FILE AS A STARTER.      ***
%        ***  USE THE JOURNAL-SPECIFIC *.TEMPLATE FILE.  ***
%
% Replace amsart by the documentclass for the target journal, e.g., tran-l.
%
\documentclass[12pt]{amsart}
\usepackage{amsfonts}
\usepackage{amssymb,latexsym}
\usepackage{enumerate}
\usepackage{mathrsfs}
\makeatletter
\@namedef{subjclassname@2010}{%
  \textup{2010} Mathematics Subject Classification}
\makeatother

\ProvidesFile{ueuf.fd}
  [2002/01/19 v2.2g %
    AMS font definitions%
  ]
\DeclareFontFamily{U}{euf}{}
\DeclareFontShape{U}{euf}{m}{n}{%
  <5><6><7><8><9>gen*eufm%
  <10><10.95><12><14.4><17.28><20.74><24.88>eufm10%
  }{}
\DeclareFontShape{U}{euf}{b}{n}{%
  <5><6><7><8><9>gen*eufb%
  <10><10.95><12><14.4><17.28><20.74><24.88>eufb10%
  }{}

\ProvidesFile{umsb.fd}
  [2002/01/19 v2.2g %
    AMS font definitions%
  ]
\DeclareFontFamily{U}{msb}{}
\DeclareFontShape{U}{msb}{m}{n}{%
  <5><6><7><8><9>gen*msbm%
  <10><10.95><12><14.4><17.28><20.74><24.88>msbm10%
  }{}

\ProvidesFile{umsa.fd}
  [2002/01/19 v2.2g %
    AMS font definitions%
  ]
\DeclareFontFamily{U}{msa}{}
\DeclareFontShape{U}{msa}{m}{n}{%
  <5><6><7><8><9>gen*msam%
  <10><10.95><12><14.4><17.28><20.74><24.88>msam10%
  }{}

\newtheorem{theorem}{Theorem}[section]
\newtheorem{lemma}[theorem]{Lemma}

\theoremstyle{definition}
\newtheorem{definition}[theorem]{Definition}

\numberwithin{equation}{section}
\frenchspacing

\textwidth=13.5cm
\textheight=23cm
\parindent=16pt
\oddsidemargin=-0.5cm
\evensidemargin=-0.5cm
\topmargin=-0.5cm
%    Absolute value notation

%    Blank box placeholder for figures (to avoid requiring any
%    particular graphics capabilities for printing this document).

\begin{document}

\title[On the exact degree of  multi-cyclic extension of $\mathbb{F}_{q}(t)$]
{On the exact degree of  multi-cyclic extension of $\mathbb{F}_{q}(t)$}

\author{Su Hu and Yan Li}

\address{Department of Mathematics, Korea Advanced Institute of Science and Technology (KAIST), 373-1 Guseong-dong, Yuseong-gu, Daejeon 305-701, South Korea
\\Present Address: Department of Mathematics and Statistics, McGill University, 805 Sherbrooke St. West, Montr\'eal, Qu\'ebec H3A 2K6, Canada}
\email{hus04@mails.tsinghua.edu.cn, hu@math.mcgill.ca}

\address{Department of Applied Mathematics, China Agriculture University, Beijing 100083, China}
\email{liyan\_00@mails.tsinghua.edu.cn}

\subjclass[2000]{11R58,11A15,11T23,11T24,11R45} \keywords{Power residue, Function
field, Kummer extension, Artin-Schreier extension, Character sum, Chebotarev's
Density Theorem.}

%\date{January 1, 2001 and, in revised form, June 22, 2001.}

%\dedicatory{This paper is dedicated to our advisors.}

\begin{abstract}
  Let
$q$ be a power of a prime number $p$, $k=\mathbb{F}_{q}(t)$ be the
rational function field over finite field $\mathbb{F}_{q}$ and $K/k$
be a multi-cyclic extension of prime degree. In this paper we will give an
exact formula for the degree of $K$ over $k$ by considering both Kummer and Artin-Schreier cases.
\end{abstract}

\maketitle

% we give a new approach to

\def\C{\mathbb C_p}
\def\BZ{\mathbb Z}
\def\Z{\mathbb Z_p}
\def\Q{\mathbb Q_p}
\def\C{\mathbb C_p}
\def\BZ{\mathbb Z}
\def\Z{\mathbb Z_p}
\def\Q{\mathbb Q_p}
\def\psum{\sideset{}{^{(p)}}\sum}
\def\pprod{\sideset{}{^{(p)}}\prod}

\section{Introduction}

Let $S=\{a_{1},...,a_{l}\}$ be a finite set of nonzero integers. By
computing the relative density of the set of prime numbers $p$ for
which all the $a_{i}'s$ are simultaneously  quadratic residues
modulo $p$, Balasubramanian, Luca and Thangadurai~\cite{Luca} gave
an exact formula for the degree of the multi-quadratic field
$\mathbb{Q}(\sqrt{a_{1}},...,\sqrt{a_{l}})$ over $\mathbb{Q}$.

Let
$q$ be a power of a prime number $p$, $k=\mathbb{F}_{q}(t)$ be the
rational function field over finite field $\mathbb{F}_{q}$ and $K/k$
be a multi-cyclic extension of prime degree. In this paper we will give an
exact formula of $K$ over $k$. We consider the following two different
situations. The first situation is  multi-cyclic
Kummer extensions. That is $K=k(\sqrt[m]{D_{1}},...,\sqrt[m]{D_{l}})$ and
$S=\{D_{1},...,D_{l}\}$ is a finite set of nonzero polynomials in $\mathbb{F}_{q}[t]$, where $m$ is a prime factor of $q-1$. The second situation is  multi-cyclic Artin-Schreier extensions. That is
$K=k(\alpha_{1},...,\alpha_{l})$ and there is a finite set
$S=\{D_{1},...,D_{l}\}$ of nonzero elements in
$\mathbb{F}_{q}(t)$ such that
$$\alpha_{i}^{p}-\alpha_{i}=D_{i} ~~(1\leq i\leq l).$$   We  follow  Balasubramanian, Luca and Thangadurai's  approach to consider the
above two situations in Section 2 and 3,  respectively. In these two sections, we also assume $K$ is a geometric
extension of $k$, i.e. the full constant field of $K$ is
$\mathbb{F}_{q}$ (see \cite[p.\,77]{Ro}). Our main tool is estimations of  certain character sums over $\mathbb F_q[T]$ (see~Lemma \ref{xiao1} and
 \ref{xiao2}, and the proof for Theorem~\ref{Kummer2}~and~\ref{AT2} below). In section 4, using abelian Kummer theory instead of Lemma \ref{xiao1} and
Lemma \ref{xiao2} we give another approach to this problem. Notice that in section 4 we do not assume $K/k$ is a geometric extension.

Throughout the paper, $\mathbb{C}$ denotes the complex field, $P$ denotes the monic irreducible polynomial, $N$ denotes a positive integer, $\pi(N)$ denotes the number of monic irreducible polynomial $P$ such that deg$P=N$ and $S_{k}$  denotes the set of monic irreducible polynomials which are unramified  in
$K$. A set $S$ of monic irreducible polynomials is said to have the relative density $\varepsilon$ with $0\leq \varepsilon\leq 1$,
if $$\varepsilon(S)=\lim\limits_{N\to\infty}\frac{\#\{P\in S~|~\textrm{deg}P=N\}}{\pi(N)}$$ exists.
In this case $$\varepsilon(S)=\lim\limits_{N\to\infty}\frac{\#\{P\in S~|~\textrm{deg}P\leq N\}}{\#\{P~|~\textrm{deg}P\leq N\}}$$ by Stolz's theorem.

A set $S$ of monic irreducible polynomials is said to have the Dirichlet density if
$$\delta(S)=\lim\limits_{s\to 1^{+}}\frac{\sum_{P\in S}NP^{-s}}{\sum_{P} NP^{-s}}$$ exists,
where $NP=q^{\textrm{deg}P}$, see \cite[p.\,126]{Ro}. Notice that the existence of  the Dirichlet density does not imply
the existence of  the  relative density, see Lemma 4.5 in~\cite{MS}.

The following lemmas will be used in the proof of our results.
\begin{lemma}~\label{Che}(Chebotarev's Density Theorem, first version, see Theorem 9.13A of ~\cite{Ro})
\\Let $K/k$  be a  Galois  extension of global function fields and set $H=\textrm{Gal}(K/k)$. Let $C\subset H$ be a conjugacy  class in $H$ and $S_{k}$  be the set of primes of  $k$ which are unramified  in
$K$. Then
$$\delta(\{P \in S_{k}|(P,K/k)=C\})=\frac{\#C}{\#H},
$$ where $\delta$  denotes the  Dirichlet  density and  $(P,K/k)$ is the Artin symbol at $P$.
\end{lemma}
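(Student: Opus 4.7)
The plan is to prove Lemma \ref{Che} via Artin $L$-functions and character theory. The key idea is that to each character $\chi$ of $H = \textrm{Gal}(K/k)$ one attaches an Artin $L$-function $L(s, \chi, K/k)$, and the behavior of these functions at $s=1$ governs the Dirichlet density of Frobenius classes.

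First I would treat the abelian case. When $H$ is abelian, every conjugacy class $C$ is a singleton $\{\sigma\}$, and character orthogonality on $\widehat{H}$ gives
\[
\sum_{(P,K/k)=\sigma} NP^{-s} \;=\; \frac{1}{\#H}\sum_{\chi \in \widehat{H}} \overline{\chi(\sigma)} \, \log L(s, \chi, K/k) \;+\; O(1)
\]
as $s \to 1^{+}$, where $P$ ranges over unramified primes in $S_{k}$. The trivial character contributes $\tfrac{1}{\#H}\log\tfrac{1}{s-1}$ plus a bounded term, while every non-trivial $L(s,\chi,K/k)$ is holomorphic and non-vanishing at $s=1$. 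Dividing by $\sum_{P} NP^{-s}$ then yields Dirichlet density $\tfrac{1}{\#H}=\tfrac{\#C}{\#H}$.

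For the general Galois case I would follow Deuring's reduction. Pick $\sigma \in C$, set $Z=\langle\sigma\rangle$, and take $E=K^{Z}$ so that $K/E$ is cyclic, hence abelian. Apply the abelian case to $K/E$ to compute the density in $E$ of primes $\mathfrak{q}$ whose Frobenius in $\textrm{Gal}(K/E)$ equals $\sigma$. One then descends from $E$ to $k$: each unramified $P \in S_{k}$ with Frobenius class $C$ in $H$ has a controlled set of primes $\mathfrak{q}$ of $E$ above it with the right local behavior, and tracking the orbit combinatorics of $H$ acting by conjugation on $\sigma$ converts the $E$-density into the desired $k$-density $\#C/\#H$.

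The main obstacle is the analytic input on $L$-functions: continuation of $L(s,\chi,K/k)$ to (a neighborhood of) $s=1$ and non-vanishing there for non-trivial $\chi$. In the function field setting this is considerably cleaner than over number fields: for abelian $\chi$, $L(s,\chi)$ is a polynomial in $q^{-s}$ by Weil's work on the Riemann hypothesis for curves, and the factorization $\zeta_{K}(s) = \prod_{\chi} L(s,\chi,K/k)$ combined with the simple pole of $\zeta_{K}$ at $s=1$ forces each non-trivial factor to be holomorphic and non-zero there. Granting these analytic inputs, the reduction above completes the proof, as carried out in detail in Rosen \cite[Ch.~9]{Ro}.
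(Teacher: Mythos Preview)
The paper does not prove this lemma at all: it is stated as a citation of Theorem~9.13A in Rosen~\cite{Ro} and used as a black box. Your proposal is therefore not comparable to any argument in the paper, since there is none; you have instead sketched the standard textbook proof (Artin $L$-functions plus Deuring's reduction to the cyclic case), which is indeed the approach Rosen takes in the reference cited. The sketch is broadly correct, though note one slight overstatement: for an arbitrary Galois extension $K/k$ of function fields the abelian $L$-functions $L(s,\chi,K/k)$ need not literally be polynomials in $q^{-s}$ when $K/k$ has a constant field extension (one picks up a factor like $(1-q^{1-s})^{-1}$); what matters, and what you use, is holomorphy and non-vanishing at $s=1$ for non-trivial $\chi$, which does hold.
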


\begin{lemma}~\label{Che2}(Chebotarev's Density Theorem, second version, see Theorem 9.13B of ~\cite{Ro})
\\Let $K/k$  be a  geometric, Galois  extension of global function fields and set $H=\textrm{Gal}(K/k)$. Let $C\subset H$ be a conjugacy  class in $H$. Suppose the common constant field of $K$ and $k$ has $q$ elements. Let $S_{k}$  be the set of primes of  $k$ which are unramified  in
$K$. Then for each positive integer $N$, we have
$$ \#\{P\in S_{k}~|~\textrm{deg}P=N, (P,K/k)=C\}=\frac{\#C}{\#H}\frac{q^{N}}{N}+ O\left(\frac{q^{N/2}}{N}\right).
$$
\end{lemma}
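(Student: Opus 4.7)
Since Lemma~\ref{Che2} is cited as Rosen's Theorem~9.13B, the plan is to reconstruct the standard Artin L-function proof of the effective Chebotarev density theorem over a function field. The essential input is the Riemann hypothesis for curves over finite fields (Weil), which supplies the square-root error term.

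First, to each irreducible complex character $\chi$ of $H = \textrm{Gal}(K/k)$ I would attach the Artin L-function $L(u, \chi, K/k)$, viewed as a function of $u = q^{-s}$. For the trivial character this is the zeta function $Z_{k}(u) = 1/((1-u)(1-qu))$, with its pole at $u = 1/q$; for any nontrivial irreducible $\chi$ it is a polynomial in $u$ whose inverse roots all have absolute value $q^{1/2}$. The geometric hypothesis on $K/k$ is used here to guarantee that the curve corresponding to $K$ is geometrically connected, so that no factor $1-qu$ appears in any nontrivial $L(u, \chi, K/k)$ and hence no spurious main term is produced.

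Second, character orthogonality on $H$ yields, for each unramified prime $P$,
\begin{equation*}
\mathbf{1}_{[(P, K/k) = C]} = \frac{\#C}{\#H} \sum_{\chi} \overline{\chi(C)} \, \chi((P, K/k)).
\end{equation*}
Multiplying by $\textrm{deg}\,P$, summing over pairs $(P, m)$ with $P$ unramified and $m\cdot\textrm{deg}\,P = N$, and identifying the result with the $N$-th coefficient of $-u\frac{d}{du}\log L(u, \chi, K/k)$, one extracts the main term $(\#C/\#H)\,q^{N}/N$ from the pole of $Z_k(u)$. By the Weil bound, the contribution of each nontrivial $\chi$ to the same coefficient is bounded by $(\deg L(u,\chi))\,q^{N/2}$, and summing over the finitely many irreducible characters of $H$ yields an overall contribution of $O(q^{N/2})$. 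Dividing by $N$ produces the stated error $O(q^{N/2}/N)$.

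The main obstacle is the careful bookkeeping of the lower-order contributions: the prime-power terms $\chi(\textrm{Frob}_P^{m})$ with $m \geq 2$ appearing in the logarithmic derivative, and the contribution of the finitely many primes ramified in $K/k$. The former are controlled by the trivial bound $\sum_{m \mid N,\, m \geq 2} q^{N/m} = O(q^{N/2})$ together with $|\chi(\textrm{Frob}_P^{m})| \leq \chi(1)$, and the latter is $O(1)$ since the ramification locus is finite. Both are absorbed into the $O(q^{N/2}/N)$ error term, completing the reduction of the problem to Weil's Riemann hypothesis for curves.
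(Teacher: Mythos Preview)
The paper does not give its own proof of this lemma: it is simply quoted from Rosen~\cite{Ro}, Theorem~9.13B, and used as a black box. So there is nothing to compare against in the paper itself.

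Your sketch is essentially the standard Artin $L$-function argument that Rosen carries out, and it is correct in outline. One small inaccuracy: you write $Z_{k}(u)=1/((1-u)(1-qu))$, which holds only when $k$ has genus $0$ (as in the paper's application $k=\mathbb{F}_q(t)$), whereas the lemma is stated for an arbitrary global function field $k$; in general $Z_k(u)=P_k(u)/((1-u)(1-qu))$ with $P_k$ a polynomial of degree $2g_k$ whose inverse roots have absolute value $q^{1/2}$. This does not affect the argument, since the extra numerator contributes only to the $O(q^{N/2}/N)$ term and the main term still comes from the simple pole at $u=1/q$. With that correction your outline matches the standard proof.
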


\begin{lemma}~\label{Che3}(The prime number theorem for polynomials, see Theorem 2.2 of ~\cite{Ro})
\\Let $N$  be a positive integer and $\pi(N)$ be the number of monic irreducible polynomial $P$  in $\mathbb{F}_{q}[t]$ of degree $N$. Then
$$ \pi(N)=\frac{q^{N}}{N}+ O\left(\frac{q^{N/2}}{N}\right).
$$
\end{lemma}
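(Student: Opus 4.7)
The plan is to obtain a closed-form expression for $\pi(N)$ by factoring a well-chosen polynomial over $\mathbb{F}_q$, and then read off the main term and the error term directly. The key identity is the factorization
$$X^{q^N}-X=\prod_{d\mid N}\prod_{\substack{P\text{ monic irreducible}\\ \deg P=d}}P(X)$$
in $\mathbb{F}_q[X]$. First I would justify this by noting that the roots of $X^{q^N}-X$ in a fixed algebraic closure are exactly the elements of $\mathbb{F}_{q^N}$, that the polynomial is squarefree (its derivative equals $-1$), and that each element of $\mathbb{F}_{q^N}$ lies in a unique smallest subfield $\mathbb{F}_{q^d}$ with $d\mid N$ and is therefore a root of a unique monic irreducible factor of degree $d$ over $\mathbb{F}_q$.

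Next I would compare degrees on both sides to get the identity
$$q^{N}=\sum_{d\mid N} d\,\pi(d).$$
Applying M\"obius inversion to this arithmetic identity yields
$$N\pi(N)=\sum_{d\mid N}\mu(N/d)\,q^{d}.$$

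Finally I would isolate the dominant term $d=N$ and estimate the rest crudely: since every proper divisor $d$ of $N$ satisfies $d\leq N/2$, we have
$$\Bigl|\,N\pi(N)-q^{N}\,\Bigr|\leq\sum_{d\mid N,\,d<N}q^{d}\leq\sum_{d=1}^{\lfloor N/2\rfloor}q^{d}\leq 2q^{N/2}$$
for all $N\geq 2$. Dividing by $N$ gives the claimed estimate $\pi(N)=q^{N}/N+O(q^{N/2}/N)$. There is essentially no obstacle here: the only non-trivial input is the factorization of $X^{q^{N}}-X$, which is a standard consequence of the structure of finite fields, and the rest is M\"obius inversion together with a geometric-series bound on divisors. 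I would only need to double-check the case $N=1$ (where there are no proper divisors and the estimate is vacuously correct).
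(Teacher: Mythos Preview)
Your argument is correct and is precisely the classical proof: factor $X^{q^N}-X$ into monic irreducibles of degree dividing $N$, compare degrees to get $q^N=\sum_{d\mid N}d\,\pi(d)$, M\"obius-invert, and bound the tail by a geometric series. The paper does not supply its own proof of this lemma at all; it simply quotes it as Theorem~2.2 of Rosen~\cite{Ro}, and what you have written is exactly the argument given there.
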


\section{Multi Kummer extensions}

In this section, let $q$ be a power of a prime and $m$ be any prime
divisor of $q-1$. Let $K$ be a multi-$m$-cyclic extension of
$k=\mathbb{F}_{q}(t)$. That is
$K=k(\sqrt[m]{D_{1}},...,\sqrt[m]{D_{l}})$ and
$S=\{D_{1},...,D_{l}\}$ is a finite set of nonconstant polynomials
in $A=\mathbb{F}_{q}[t]$. Let $\mathbb{Z}_m$ be the set of integers
$$\mathbb{Z}_m=\{0,1,2\cdots,m-2,m-1\} .$$ Let
$\gamma_S$ be the cardinality of the following set
$$\{(a_1,a_2\cdots,a_l)\in \mathbb{Z}_{m}^l~|~D_{1}^{a_{1}}D_{2}^{a_2}\cdots
D_{l}^{a_l}=F^m\ \mathrm{for\ some}\ F\in A \}.$$

In this section, we will prove the following result.

\begin{theorem}~\label{Kummer}
For a given finite set $S$ of nonzero polynomials with $|S|=l$, we
have $$[K:k]=m^{l-r},$$ where $r$ is the non-negative integer given
by $m^{r}=\gamma_{S}$.
\end{theorem}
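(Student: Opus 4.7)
The plan is to compute, in two independent ways, the relative density of
$$T = \bigl\{ P \in S_{k} : D_{i} \text{ is an } m\text{-th power residue mod } P \text{ for every } 1 \le i \le l \bigr\}$$
and then equate the two expressions.

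On the Chebotarev side, since $m \mid q-1$ the field $K$ is a multi-Kummer extension of $k$ and its Galois group embeds in $(\mathbb{Z}/m\mathbb{Z})^{l}$. For any monic irreducible $P$ coprime to $D_{1}\cdots D_{l}$, the Frobenius at $P$ sends $\sqrt[m]{D_{i}}$ to $\left(\frac{D_{i}}{P}\right)_{m}\sqrt[m]{D_{i}}$, so $P$ splits completely in $K$ if and only if $P \in T$. Applying Lemma~\ref{Che2} to the trivial conjugacy class, together with Lemma~\ref{Che3}, therefore shows that the relative density of $T$ exists and equals $1/[K:k]$.

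On the character-sum side, orthogonality of the $m$-th roots of unity gives, for every $P$ coprime to $D_{1}\cdots D_{l}$,
$$\mathbf{1}_{P \in T} \;=\; \frac{1}{m^{l}} \sum_{(a_{1},\ldots,a_{l}) \in \mathbb{Z}_{m}^{l}} \left( \frac{D_{1}^{a_{1}} \cdots D_{l}^{a_{l}}}{P} \right)_{m}.$$
I would sum this over monic irreducibles $P$ with $\deg P \le N$, exchange the two sums, and split the $m^{l}$ tuples $(a_{1},\ldots,a_{l})$ into those for which $D_{1}^{a_{1}}\cdots D_{l}^{a_{l}}$ is an $m$-th power in $A$ and those for which it is not. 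By definition there are exactly $\gamma_{S} = m^{r}$ of the former, and each contributes $\pi(N)+O(1)$ to the inner sum because the residue symbol is then identically $1$ away from finitely many primes. For a non-trivial tuple, the map $P \mapsto \left(\frac{D_{1}^{a_{1}}\cdots D_{l}^{a_{l}}}{P}\right)_{m}$ is a non-trivial character over $A$, and Lemma~\ref{xiao1} will force its contribution to be $o(\pi(N))$. Dividing by $\pi(N)$ (using Lemma~\ref{Che3}) yields that the relative density of $T$ equals $m^{r}/m^{l} = m^{r-l}$.

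Equating the two computations gives $1/[K:k] = m^{r-l}$, i.e.\ $[K:k] = m^{l-r}$. The main obstacle is Lemma~\ref{xiao1}, which supplies the non-trivial cancellation in the character sum $\sum_{\deg P = N} \left(\frac{D}{P}\right)_{m}$ whenever $D \in A$ is not a perfect $m$-th power; this is the technical heart of the argument and is what upgrades the naive orthogonality calculation into an asymptotic density statement. A secondary point to verify is that the ``trivial tuples'' for the orthogonality sum coincide exactly with the tuples enumerated by $\gamma_{S}$, which is where the geometric hypothesis on $K/k$ enters to exclude constant-field contributions to being an $m$-th power in $A$.
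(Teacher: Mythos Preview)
Your proposal is correct and follows essentially the same route as the paper: the paper also computes the relative density of $\mathscr{M}$ in two ways, once via the orthogonality/character-sum argument (this is Theorem~\ref{Kummer2}, using Lemma~\ref{xiao1} for the non-$m$-th-power tuples) and once via Chebotarev (Lemma~\ref{Che2}) applied to the identity class, then equates the two. One small clarification on your last remark: the geometric hypothesis is invoked not to match $\gamma_S$ with the trivial tuples, but rather to guarantee that whenever $n=D_1^{a_1}\cdots D_l^{a_l}$ is \emph{not} an $m$-th power the subextension $k(\sqrt[m]{n})\subseteq K$ is itself geometric, so that Lemma~\ref{xiao1} applies.
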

Let $A=\mathbb{F}_{q}[t]$ and $A^{+}$ be the set of monic
polynomials in $A$. Let $P\in A$ be an irreducible polynomial and
$d$ be a divisor of $q-1$.

\begin{definition}(see ~\cite[p.\,24]{Ro})
If $P$ does not divide $a$, let $(a/P)_{d}$ be the unique elements
of $\mathbb{F}_{q}^{*}$ such that
$$a^{\frac{NP-1}{d}}\equiv\left(\frac{a}{P}\right)_{d} (\textrm{mod} P).$$ If $P|a$, define $(a/P)_{d}=0$. The symbol $(a/P)_{d}$ is called the $d$-th power residue symbol.
\end{definition}

If $m$ is a fixed prime divisor of $q-1$, then $\mathbb{F}_{q}^{*}$ has a unique subgroup $S_{m}$ of order $m$. Let $\eta$ be a fixed generator of $S_{m}$, i.e. $\eta\neq\bar{1}$ and $\eta^p=\bar{1}$, then $S_{m}=\{1,\eta,\eta^{2},...,\eta^{m-1}\}.$ We define $\chi$ to be the following
monomorphism:
$$\begin{aligned}\chi:\ \ S_{m}
&\rightarrow \mathbb{C}\\
 \eta^{k}&\mapsto\exp(\frac{2k\pi i}{m})\end{aligned}
$$
and we also denote
$$\left(\frac{a}{P}\right)=\chi\cdot\left(\frac{a}{P}\right)_{m},$$ for any $a\in A$ such that $P\nmid a$. If $P|a$, denote $$\left(\frac{a}{P}\right)=0.$$
We have
\begin{equation*}\label{Mul}\begin{aligned}\left(\frac{ab}{P}\right)=\left(\frac{a}{P}\right)\left(\frac{b}{P}\right),\end{aligned}\end{equation*}
for any $a,b\in A$, that is for any irreducible polynomial $P\in\mathbb{F}_{q}[t]$, $\left(\frac{\cdot}{P}\right)$ is a multiplicative character on $\mathbb{F}_{q}[t]$.

\begin{lemma}~\label{xiao1}
Let $E=k(\sqrt[m]{n})$ be a geometric Kummer extension of $k$. We
have
$$\sum_{\textrm{deg}P=N}\left(\left(\frac{n}{P}\right)+\left(\frac{n}{P}\right)^2+\cdots+\left(\frac{n}{P}\right)^{m-1}\right)=o(\pi(N)).$$
\end{lemma}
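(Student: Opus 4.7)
The plan is to recognize each sum $\sum_{\deg P = N}\left(\frac{n}{P}\right)^{j}$ as a character sum over Frobenius classes in the degree-$m$ cyclic Galois group $\mathrm{Gal}(E/k)$, and then appeal to the effective Chebotarev theorem in Lemma~\ref{Che2}.

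First I would spell out the relationship between the $m$-th power residue symbol and the Artin symbol. For any monic irreducible $P$ coprime to $n$ and unramified in $E$, the Frobenius $(P,E/k)$ acts on $\sqrt[m]{n}$ by multiplication by $\left(\frac{n}{P}\right)_{m}\in S_{m}$; composing with the fixed isomorphism $\chi$, the complex symbol $\left(\frac{n}{P}\right)$ equals $e^{2\pi i a_{P}/m}$, where $a_{P}\in\mathbb{Z}/m\mathbb{Z}$ is the image of $(P,E/k)$ under a chosen identification $\mathrm{Gal}(E/k)\cong\mathbb{Z}/m\mathbb{Z}$. Since $m$ is prime and $E/k$ is a nontrivial cyclic extension of degree $m$, each map $a\mapsto e^{2\pi i j a/m}$ with $1\le j\le m-1$ is a nontrivial character of $\mathrm{Gal}(E/k)$.

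Second I would substitute Lemma~\ref{Che2} into the resulting character sum. Because $E/k$ is geometric and Galois with Galois group identified with $\mathbb{Z}/m\mathbb{Z}$, for each class $a$ we have
$$\#\{P:\deg P=N,\,(P,E/k)=a\}=\frac{1}{m}\cdot\frac{q^{N}}{N}+O\!\left(\frac{q^{N/2}}{N}\right).$$
Summing the character values against these counts,
$$\sum_{\deg P=N}\left(\frac{n}{P}\right)^{j}=\sum_{a=0}^{m-1}e^{2\pi i j a/m}\left(\frac{q^{N}}{mN}+O\!\left(\frac{q^{N/2}}{N}\right)\right)=O\!\left(\frac{q^{N/2}}{N}\right),$$
where the main term is killed by the orthogonality $\sum_{a=0}^{m-1}e^{2\pi i j a/m}=0$ valid for $1\le j\le m-1$. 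The finitely many irreducibles dividing $n$ or ramifying in $E/k$ contribute at most $O(1)$ and are absorbed. Summing over $j=1,\ldots,m-1$ and invoking the prime number theorem for polynomials (Lemma~\ref{Che3}), $\pi(N)=q^{N}/N+O(q^{N/2}/N)$, one concludes that the total is $O(q^{N/2}/N)=o(\pi(N))$, as claimed.

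The only real obstacle is keeping the identification between the power residue symbol and the Frobenius straight so that the sum genuinely becomes a sum of \emph{nontrivial} characters of $\mathrm{Gal}(E/k)$; once the geometric hypothesis and the primality of $m$ are used to guarantee nontriviality, the orthogonality of $m$-th roots of unity kills the main term and the error term follows immediately from the effective Chebotarev estimate.
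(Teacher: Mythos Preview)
Your proof is correct and follows essentially the same strategy as the paper: both apply the effective Chebotarev theorem (Lemma~\ref{Che2}) to distribute the primes of degree $N$ among Frobenius classes in $\mathrm{Gal}(E/k)$ and then exploit the vanishing of nontrivial sums of $m$-th roots of unity to kill the main term. The only cosmetic difference is that the paper sums over $j$ first (using the coarser partition into $\{P:(\frac{n}{P})=1\}$ and its complement, where the inner sum equals $m-1$ or $-1$), whereas you fix $j$ and partition by the exact Frobenius class before invoking orthogonality.
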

\begin{proof} Suppose Gal($E/k)=<\sigma>$. If $N$ is big enough, from  Proposition 10.6 in~\cite{Ro}, we have
\begin{equation*}
\begin{aligned}
T_{1}:&=\left\{\textrm{deg}P=N~|~\left(\frac{n}{P}\right)=1\right\}=\{P \in S_{k}~|~\textrm{deg}P=N, (P,E/k)=\rm{id}\},\\
T_{2}:&=\left\{\textrm{deg}P=N~|~\left(\frac{n}{P}\right)\neq
1\right\}=\{P \in S_{k}~|~\textrm{deg}P=N, (P,E/k)=\sigma^i, m\nmid
i\},
\end{aligned}
\end{equation*}
where $S_{k}$  is the set of monic irreducible polynomials which are
unramified  in $E$. By Chebotarev's Density Theorem (Lemma
\ref{Che2}), we have
\begin{equation*}
\begin{aligned}
\#T_{1}=&\frac{1}{m}\frac{q^{N}}{N}+
O\left(\frac{q^{N/2}}{N}\right);\\
\#T_{2}=&\frac{m-1}{m}\frac{q^{N}}{N}+
O\left(\frac{q^{N/2}}{N}\right).
\end{aligned}
\end{equation*}

Thus \begin{equation*}\begin{aligned} &\sum_{\substack{P\in T_{1}}
}\left(\left(\frac{n}{P}\right)+\left(\frac{n}{P}\right)^2+\cdots+\left(\frac{n}{P}\right)^{m-1}\right)\\=&\sum_{\substack{P\in
T_{1}}
}(1+1^2+\cdots+1^{m-1})=(m-1)\#T_{1}\\=&\frac{m-1}{m}\frac{q^{N}}{N}+
O\left(\frac{q^{N/2}}{N}\right);\\
&\sum_{\substack{P\in T_{2}}
}\left(\left(\frac{n}{P}\right)+\left(\frac{n}{P}\right)^2+\cdots+\left(\frac{n}{P}\right)^{m-1}\right)\\=&\sum_{\substack{P\in
T_{2}}
}(\zeta_{m}+\zeta_{m}^2+\cdots+\zeta_{m}^{m-1})=(-1)\#T_{2}\\=&-\frac{m-1}{m}\frac{q^{N}}{N}+
O\left(\frac{q^{N/2}}{N}\right),
\end{aligned}\end{equation*}
where $\zeta_{m}$ is some primitive $m$-th root of unity. Therefore,
\begin{equation*}\begin{aligned}
&\sum_{\textrm{deg}P=N}\left(\left(\frac{n}{P}\right)+\cdots+\left(\frac{n}{P}\right)^{m-1}\right)\\=&\sum_{P\in
T_{1}}\left(\left(\frac{n}{P}\right)+\cdots+\left(\frac{n}{P}\right)^{m-1}\right)+\sum_{P\in
T_{2}}\left(\left(\frac{n}{P}\right)+\cdots+\left(\frac{n}{P}\right)^{m-1}\right)\\=&O\left(\frac{q^{N/2}}{N}\right)
\end{aligned}\end{equation*}
 when $N$ is big
enough. Thus from the prime number theory for polynomials (Lemma
\ref{Che3}), we have
$$\sum_{\textrm{deg}P=N}\left(\left(\frac{n}{P}\right)+\left(\frac{n}{P}\right)^2+\cdots+\left(\frac{n}{P}\right)^{m-1}\right)=o(\pi(N)).$$

\end{proof}
\begin{lemma}
We have $\gamma_{S}=m^{r}$ for some $r\leq l$.
\end{lemma}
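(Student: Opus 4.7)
The plan is to recognize that $G := \{(a_1,\dots,a_l)\in \mathbb{Z}_m^l \mid D_1^{a_1}\cdots D_l^{a_l}=F^m \text{ for some } F\in A\}$ is not merely a set but the kernel of a natural group homomorphism, and hence a subgroup of $\mathbb{Z}_m^l$. Since $m$ is prime, $\mathbb{Z}_m=\mathbb{F}_m$ is a field, $\mathbb{Z}_m^l$ is an $\mathbb{F}_m$-vector space, and every additive subgroup is automatically an $\mathbb{F}_m$-subspace. Thus $|G|=m^r$ where $r=\dim_{\mathbb{F}_m} G$, and trivially $0\le r\le l$.

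To set this up, I would introduce the map
\[
\phi: \mathbb{Z}_m^l \longrightarrow k^{*}/(k^{*})^m,\qquad (a_1,\dots,a_l)\longmapsto D_1^{a_1}\cdots D_l^{a_l}\cdot(k^{*})^m.
\]
First I would check that $\phi$ is a well-defined group homomorphism: replacing any $a_i$ by $a_i+m$ multiplies the representative by $D_i^m\in(k^{*})^m$, and the product of exponents clearly turns into a sum inside the quotient.

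Next, the key step is to identify $\ker\phi$ with $G$. The inclusion $G\subseteq\ker\phi$ is immediate, since any $F\in A$ lies in $k^{*}$ and $F^m\in(k^{*})^m$. For the reverse inclusion, suppose $D_1^{a_1}\cdots D_l^{a_l}=g^m$ for some $g\in k^{*}$. Then $g$ satisfies the monic equation $X^m - D_1^{a_1}\cdots D_l^{a_l}=0$ over $A=\mathbb{F}_q[t]$; since $A$ is a UFD, hence integrally closed in its fraction field $k$, we conclude $g\in A$. Thus we may take $F=g$ and $(a_1,\dots,a_l)\in G$.

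The main (and in fact only) obstacle is this transition from $m$-th powers in $k$ to $m$-th powers in $A$; once one invokes integral closedness of $A$ (equivalently, unique factorization), everything else is routine. Finally, since $G=\ker\phi$ is a subgroup of the $\mathbb{F}_m$-vector space $\mathbb{Z}_m^l$, it is an $\mathbb{F}_m$-subspace of some dimension $r\le l$, and $\gamma_S=|G|=m^r$.
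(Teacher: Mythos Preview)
Your proof is correct and follows essentially the same approach as the paper: the paper's proof is the one-line reference ``the proof is the same as Lemma~2.1 in~\cite{Luca} if we replace $\mathbb{Q}^{*}/\mathbb{Q}^{*2}$ with $k^{*}/k^{*m}$,'' which amounts precisely to your map $\phi:\mathbb{Z}_m^l\to k^{*}/(k^{*})^m$ and the identification $\gamma_S=\#\ker\phi=m^r$. Your additional verification that an $m$-th power in $k^{*}$ with value in $A$ is already an $m$-th power in $A$ (via integral closedness of $A$) is a detail the paper leaves implicit, and it is good that you made it explicit.
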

\begin{proof}
The proof is the same as Lemma 2.1 in~\cite{Luca} if we replace
$\mathbb{Q}^{*}/\mathbb{Q}^{*2}$ with $k^{*}/k^{*m}$.
\end{proof}

\begin{theorem}~\label{Kummer2}
Let $$\mathscr{M}:=\left\{P~|~\left(\frac{D_{1}}{P}\right)_{m}=...=\left(\frac{D_{l}}{P}\right)_{m}=1\right\}.$$
The relative density of $\mathscr{M}$ equals to
$$\frac{\gamma_{s}}{m^{l}}.$$
\end{theorem}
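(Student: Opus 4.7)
The plan is to encode membership in $\mathscr{M}$ via the orthogonality identity
$$\frac{1}{m}\sum_{j=0}^{m-1}\left(\frac{D_i}{P}\right)^{j}=\begin{cases}1 & \text{if }\left(\frac{D_i}{P}\right)_{m}=1,\\ 0 & \text{otherwise,}\end{cases}$$
valid whenever $P\nmid D_i$. Multiplying these indicators over $i=1,\dots,l$ and summing over primes of degree $N$ chosen large enough that no such prime divides $D_1\cdots D_l$, one obtains
$$\#\{P\in\mathscr{M}:\deg P=N\}=\frac{1}{m^{l}}\sum_{(j_1,\dots,j_l)\in\mathbb{Z}_m^{l}}\sum_{\deg P=N}\left(\frac{D_1^{j_1}\cdots D_l^{j_l}}{P}\right),$$
so the task reduces to estimating the inner character sum for each exponent tuple.

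I would then split the outer sum according to whether $D_1^{j_1}\cdots D_l^{j_l}$ is an $m$-th power in $A$. By the definition of $\gamma_S$ there are exactly $\gamma_S$ such ``trivial'' tuples; writing the product as $F^m$ makes the inner character equal to $(F/P)^m=1$ off the finitely many primes dividing $F$, so each inner sum is $\pi(N)+O(1)$ and the total contribution of the trivial tuples to the right-hand side is $(\gamma_S/m^{l})\pi(N)+O(1)$.

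For the remaining $m^{l}-\gamma_S$ tuples I would group them into orbits under the scaling action $(j_1,\dots,j_l)\mapsto(kj_1,\dots,kj_l)$ of $(\mathbb{Z}/m\mathbb{Z})^{*}$. Since $m$ is prime and no non-trivial tuple can be fixed by any scalar other than $1$, every such orbit has size exactly $m-1$, and the sum of the orbit's characters at a fixed prime equals $\sum_{k=1}^{m-1}(n/P)^{k}$, where $n=D_1^{j_1}\cdots D_l^{j_l}$ is any representative. Because $k(\sqrt[m]{n})\subseteq K$ and $K/k$ is geometric by hypothesis, $k(\sqrt[m]{n})/k$ is itself a geometric Kummer extension of degree $m$, so Lemma~\ref{xiao1} yields $\sum_{\deg P=N}\sum_{k=1}^{m-1}(n/P)^{k}=o(\pi(N))$; summing over the finitely many orbits gives a total of $o(\pi(N))$.

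Combining the two ranges produces $\#\{P\in\mathscr{M}:\deg P=N\}=(\gamma_S/m^{l})\pi(N)+o(\pi(N))$; dividing by $\pi(N)$ and letting $N\to\infty$ then yields the claimed relative density. The main subtlety is the orbit bookkeeping: one must verify that the non-$m$-th-power tuples really partition into $(\mathbb{Z}/m\mathbb{Z})^{*}$-orbits of uniform size $m-1$ (which is where primality of $m$ is essential), and that each intermediate $k(\sqrt[m]{n})$ arising from such an orbit is genuinely geometric (which is where the geometricity of $K$ is used, since it rules out pathological $n$ of the form $cF^{m}$ with $c\in\mathbb{F}_q^{*}\setminus(\mathbb{F}_q^{*})^{m}$).
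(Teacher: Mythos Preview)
Your proposal is correct and follows essentially the same approach as the paper: both expand the indicator of $\mathscr{M}$ via orthogonality, sum over tuples $(j_1,\dots,j_l)\in\mathbb{Z}_m^l$, exploit the scaling action of $(\mathbb{Z}/m\mathbb{Z})^{*}$ on tuples to package the non-trivial terms into sums of the form $\sum_{k=1}^{m-1}(n/P)^k$, and then invoke Lemma~\ref{xiao1} together with the geometricity of $k(\sqrt[m]{n})\subseteq K$. Your orbit bookkeeping is simply a repackaging of the paper's $\frac{1}{m-1}$ averaging identity (the paper observes that $\sum_{\text{tuples}}(n^j/P)$ is independent of $j\in(\mathbb{Z}/m\mathbb{Z})^{*}$, which is equivalent to your orbit decomposition), so the two arguments are the same in substance.
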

\begin{proof}
In fact  $$\begin{aligned}\mathscr{M}:&=\left\{P~|~\left(\frac{D_{1}}{P}\right)_{m}=...=\left(\frac{D_{l}}{P}\right)_{m}=1\right\}\\ &=\left\{P~|~\left(\frac{D_{1}}{P}\right)=...=\left(\frac{D_{l}}{P}\right)=1\right\}.\end{aligned}$$
Let $\mathscr{P}(S)$ be the set of all distinct prime factors of $D_{1}D_{2}...D_{l}$. Clearly, $\mathscr{P}(S)$ is a finite set.
Let $N$ be a positive integer. Considering the following counting
function:
$$R_{N}=\frac{1}{m^{l}}\sum_{\substack{\textrm{deg}P=N \\P\not\in\mathscr{P}(S)}}\left(1+\left(\frac{D_{1}}{P}\right)+\cdots+\left(\frac{D_{1}}{P}\right)^{m-1}\right)...\left(1+\left(\frac{D_{l}}{P}\right)+\cdots+\left(\frac{D_{l}}{P}\right)^{m-1}\right).$$
Since the $m$-th power residue symbol is completely multiplicative,
we have
\begin{equation*}\begin{aligned}
R_{N}&=\frac{1}{m^{l}}\sum_{\substack{\textrm{deg}P=N\\P\not\in\mathscr{P}(S)}}\sum_{\substack{
(b_{1},...,b_{l})\in
\mathbb{Z}_{m}^l\\n=D_{1}^{b_{1}}...D_{l}^{b_{l}}}}\left(\frac{n}{P}\right)
\\&=
\frac{1}{m^{l}}\sum_{\substack{\textrm{deg}P=N\\P\not\in\mathscr{P}(S)}}
\frac{1}{m-1}\biggl(\sum_{\substack{ (b_{1},...,b_{l})\in
\mathbb{Z}_{m}^l\\n=D_{1}^{b_{1}}...D_{l}^{b_{l}}}}
\left(\frac{n}{P}\right)+\sum_{\substack{ (b_{1},...,b_{l})\in
\mathbb{Z}_{m}^l\\n=D_{1}^{b_{1}}...D_{l}^{b_{l}}}}
\left(\frac{n^2}{P}\right)+\cdots\\&\quad+\sum_{\substack{ (b_{1},...,b_{l})\in
\mathbb{Z}_{m}^l\\n=D_{1}^{b_{1}}...D_{l}^{b_{l}}}}\left(\frac{n^{m-1}}{P}\right)\biggl)
\\&=
\frac{1}{m^{l}}\sum_{\substack{\textrm{deg}P=N\\P\not\in\mathscr{P}(S)}}\frac{1}{m-1}\sum_{\substack{
(b_{1},...,b_{l})\in
\mathbb{Z}_{m}^l\\n=D_{1}^{b_{1}}...D_{l}^{b_{l}}}}
\left(\left(\frac{n}{P}\right)+\left(\frac{n}{P}\right)^2+\cdots+\left(\frac{n}{P}\right)^{m-1}\right)
\\&=
\sum_{\substack{ (b_{1},...,b_{l})\in
\mathbb{Z}_{m}^l\\n=D_{1}^{b_{1}}...D_{l}^{b_{l}}}}\frac{1}{m^{l}}\sum_{\substack{\textrm{deg}P=N\\P\not\in\mathscr{P}(S)}}
\left(\left(\frac{n}{P}\right)+\left(\frac{n}{P}\right)^2+\cdots+\left(\frac{n}{P}\right)^{m-1}\right)\left(\frac{1}{m-1}\right).
\end{aligned}\end{equation*}
If $n$ is a perfect $m$-th power, then $\left(\frac{n}{P}\right)=1$
for each $P\not\in\mathscr{P}(S)$. Thus, for these $\gamma_{S}$
values of $n$, the inner sum is
$$\frac{1}{m^{l}}\sum_{\substack{\textrm{deg}P=N\\P\not\in\mathscr{P}(S)}}
\left(\left(\frac{n}{P}\right)+\cdots+\left(\frac{n}{P}\right)^{m-1}\right)
\left(\frac{1}{m-1}\right)=\frac{1}{m^{l}}\pi(N),$$ if $N$ is large
enough. For the remanning values of $n$ (i.e. when $n$ is not a
$m$-th power). From our assumption in this paper, $K/k$ is a
geometric extension. Then we have  $k(\sqrt[m]{n})/k$ is also a
geometric $m$-cyclic extension. Thus by Lemma ~\ref{xiao1}, we have
$$\frac{1}{m^{l}}\sum_{\substack{\textrm{deg}P=N\\P\not\in\mathscr{P}(S)}}
\left(\left(\frac{n}{P}\right)+\cdots+\left(\frac{n}{P}\right)^{m-1}\right)
\left(\frac{1}{m-1}\right)=o(\pi(N)),$$
 Therefore, if $N$ is large enough,
$$R_{N}=\frac{\gamma_{S}}{m^{l}}\pi(N)+o(\pi(N))$$
and hence
$$\frac{R_{N}}{\pi(N)}=\frac{\gamma_{S}}{m^{l}}+o(1).$$
So we have
$$\lim\limits_{N\to\infty}\frac{R_{N}}{\pi(N)}=\frac{\gamma_{S}}{m^{l}}.$$
This concludes the proof.
\end{proof}

Now we can prove the main result in this section.

Proof of Theorm~\ref{Kummer}: Let
$$f(x)=(x^{m}-D_{1})(x^{m}-D_{2})...(x^{m}-D_{l})\in A[x],$$ then
$K/k$ is the splitting field of $f(x)$. Let $S_{k}$  be the set of
monic irreducible polynomials which are unramified in $K$ and
$$\mathscr{M}:=\left\{P\in
S_{k}~|~\left(\frac{D_{1}}{P}\right)=...=\left(\frac{D_{l}}{P}\right)=1\right\}.$$
By Theorem~\ref{Kummer2}, we know that the relative density of $\mathscr{M}$
is
\begin{equation}~\label{K1}\frac{\gamma_{S}}{m^{l}}=\frac{1}{m^{l-r}}.
\end{equation}
Let $\sigma_{P}=(P,K/k)\in\textrm{Gal}(K/k)$. Since
$P\in\mathscr{M}$, $D_{i}$ is a $m$-th power residue modulo $P$ and
hence $P$ splits completely in $k(\sqrt{D_{i}})$. Therefore
$\sigma_{P}$ restricted to $k(\sqrt[m]{D_{i}})$ is the identity for
$1\leq i\leq l$. Suppose $[K:k]=m^{t}$. From our assumption, $K$ is
a geometric extension of $k$. By the Chebotarev Density Theorem
(Lemma~\ref{Che2}) and the prime number theorem for polynomials
(Lemma~\ref{Che3}), the relative density of $\mathscr{M}$ is
\begin{equation}~\label{K2}\frac{1}{[K:k]}=\frac{1}{m^{t}}.\end{equation} By comparing equations (\ref{K1}) and (\ref{K2}), we get $t=l-r$.

\section{Multi Artin-Schreier extensions}

In this section, let $q$ be a power of a prime number p and $K$ be a
multi-Artin-Schreier extension of $k$. That is
$K=k(\alpha_{1},...,\alpha_{l})$ and there is a finite set
$S=\{D_{1},...,D_{l}\}$ of nonconstant elements in
$k=\mathbb{F}_{q}(t)$ such that $$\alpha_{i}^{p}-\alpha_{i}=D_{i}
~~(1\leq i\leq l).$$ In the next subsection we recall the arithmetic
of Artin-Schreier extensions (also see~\cite{Hasse} and
~\cite{H-L}).

\subsection{The arithmetic of Artin-Schreier extensions}~\label{ATT}
Let $q$ be a power of a prime number $p$. Let $k=\mathbb{F}_{q}(t)$
be the rational function field. Let $L/k$ be a cyclic extension of
degree $p$. Then  $L/k$ is an Artin-Schreier extension, that is,
$L=k(\alpha)$, where $\alpha^{p}-\alpha=D,\ D\in\mathbb{F}_{q}(t)$
and that $D$ can not be written as $x^{p}-x$ for any $x\in k$.
Conversely, for any $D\in\mathbb{F}_{q}(t)$ and $D$ can not be
written as $x^{p}-x$ for any $x\in k$, $k(\alpha)/k$ is a cyclic
extension of degree $p$, where $\alpha^{p}-\alpha=D$. Two
Artin-Schreier extensions $k(\alpha)$ and $k(\beta)$ such that
$\alpha^p-\alpha=D$ and $\beta^p-\beta=D'$ are equal if and only if
they satisfy the following relations,
\begin{equation*}
  \begin{aligned}
          \alpha &\rightarrow x\alpha+B_{0}=\beta,\\
          D&\rightarrow xD+(B_{0}^{2}-B_{0})=D',\\
          x&\in\mathbb{F}_{p}^{*}, B_{0}\in k.
                           \end{aligned}
                           \end{equation*}
(See ~\cite{Hasse} or Artin~\cite{Artin} p.180-181 and p.203-206)
Thus we can normalize $D$ to satisfy the following conditions,

          $$D=\sum_{i=1}^{m}\frac{Q_{i}}{P_{i}^{e_{i}}} + f(t),$$
                  $$(P_{i},Q_{i})=1,\
\ \textrm{and}\ p\nmid e_i,\ \textrm{for}\ 1\leq i\leq m,$$
         $$p\nmid \textrm{deg}(f(t)),\ \textrm{if}\ f(t)\not\in \mathbb{F}_{q},$$
where $P_{i}\ (1\leq i\leq m)$ are monic irreducible polynomials in
$\mathbb{F}_{q}[t]$ and $Q_{i}\ (1\leq i\leq m)$ are polynomials in
$\mathbb{F}_{q}[t]$ such that $\textrm{deg} (Q_{i}) < \textrm{deg}
(P_i^{e_i})$.

If $D$ has
the above normalized forms, then the infinite
place $(1/t)$ is split, inert, or ramified in $L$ respectively when
$f(t)=0$; $f(t)$ is a constant and the equation $x^{p}-x=f(t)$ has
no solutions in $\mathbb{F}_{q}$; $f(t)$ is not a constant. Then the
field $K$ is called real, inert imaginary, or ramified imaginary
respectively. Moreover, the finite places of $k$ which are ramified in $K$ are
$P_{1},\cdots,P_{m}$ (see \cite[p.\,39]{Hasse}).
Let $P$ be
a finite place of $k$ which is unramified in $L$, i.e. $P$ does not equal to $P_{1},\cdots,P_{m}$. Let $(P,L/k)$ be
the Artin symbol at $P$. Then
\begin{equation}~\label{Artin}
(P,L/k)\alpha=\alpha+\{\frac{D}{P}\}
\end{equation} and the Hasse symbol
$\{\frac{D}{P}\}$
  is defined for $\rm{ord}_{P}(D)\geq 0$ by the following equalities:
\begin{equation}
~\label{HS}
\begin{aligned}
\{\frac{D}{P}\} &\equiv D+D^{p}+\cdots D^{N(P)/p}\ \textrm{mod} ~P\\ &\equiv (D+D^{q}+\cdots D^{N(P)/q}) \\
&+ (D+D^{q}+\cdots D^{N(P)/q})^{p}\\
&+ \cdots\\
&+  (D+D^{q}+\cdots D^{N(P)/q})^{q/p}\ \textrm{mod}
~P,\\\{\frac{D}{P}\} &=
\textrm{tr}_{\mathbb{F}_{q}/\mathbb{F}_{p}}\textrm{tr}_{(O_{K}/P)/\mathbb{F}_{q}}(D\
{\rm mod}\
  P)
\end{aligned}
\end{equation}
(see \cite[p.\,40]{Hasse}).
The
Artin-Schreier operator $\mathcal{P}$ is defined by
$$\mathcal{P}(x)=x^{p}-x,~\textrm{for}~ x\in L,$$ and obviously $\mathcal{P}$ is an additive operator.
A root of a polynomial $x^{p}-x-a$ with $a\in k$ will be denoted by $\mathcal{P}^{-1}(a)$ (see \cite[p.\,296]{Lang}).
Let $$\mathcal{P}k=\{\mathcal{P}(a) ~~|~~a\in k\}~~\textrm{and}~~\mathcal{P}^{-1}k=\{\mathcal{P}^{-1}(a) ~~|~~a\in k\},$$

We define $\varphi$ to be the following
monomorphism:
$$\begin{aligned}\varphi:\ \ \mathbb{F}_{p}
&\rightarrow \mathbb{C}\\
 x &\mapsto\exp(\frac{2x\pi i}{p})\end{aligned}
$$
and we also denote
$$\{\{\frac{D}{P}\}\}=\varphi\cdot\{\frac{D}{P}\},$$
for any $D\in K$ such that $\rm{ord}_{P}(D)\geq 0$.
We have
\begin{equation}\label{Mul}\begin{aligned}\{\{\frac{D_{1}+D_{2}}{P}\}\}=\{\{\frac{D_{1}}{P}\}\}
\{\{\frac{D_{2}}{P}\}\},\end{aligned}\end{equation}
for any $D_{1},D_{2}\in K$ such that $\rm{ord}_{P}(D_1)\geq 0$ and $\rm{ord}_{P}(D_1)\geq 0$.

\begin{lemma}~\label{xiao2}
Let $E=k(\mathcal{P}^{-1}(n)$) be a geometric Artin-Schreier extension of $k$.
We
have
$$\sum_{\textrm{deg}P=N}\left(\{\{\frac{n}{P}\}\}+\{\{\frac{n}{P}\}\}^2+\cdots+\{\{\frac{n}{P}\}\}^{p-1}\right)=o(\pi(N)).$$
\end{lemma}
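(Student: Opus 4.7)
The plan is to mirror the proof of Lemma~\ref{xiao1} with the obvious substitution of the Artin--Schreier setup for the Kummer one. The key bridge is the formula $(P,E/k)\alpha = \alpha + \{D/P\}$ from~(\ref{Artin}): for an unramified prime $P$ of $k$, the Artin symbol at $P$ in the cyclic extension $E/k$ of degree $p$ is trivial if and only if $\{n/P\} = 0$, and otherwise the symbol is a nontrivial power of a fixed generator $\sigma$ of $\mathrm{Gal}(E/k)$.

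Accordingly, once $N$ is large enough that every $P$ of degree $N$ is unramified in $E$, I would partition the degree-$N$ primes into
\begin{equation*}
\begin{aligned}
T_{1} &= \bigl\{\deg P = N \;\bigm|\; \{\tfrac{n}{P}\} = 0\bigr\} = \{P \in S_{k} \mid \deg P = N,\ (P,E/k) = \mathrm{id}\}, \\
T_{2} &= \bigl\{\deg P = N \;\bigm|\; \{\tfrac{n}{P}\} \neq 0\bigr\} = \{P \in S_{k} \mid \deg P = N,\ (P,E/k) = \sigma^{i},\ p \nmid i\}.
\end{aligned}
\end{equation*}
Since $E/k$ is a geometric cyclic extension of degree $p$ by hypothesis, Chebotarev's Density Theorem (Lemma~\ref{Che2}) yields
\begin{equation*}
\#T_{1} = \frac{1}{p}\frac{q^{N}}{N} + O\!\left(\frac{q^{N/2}}{N}\right), \qquad \#T_{2} = \frac{p-1}{p}\frac{q^{N}}{N} + O\!\left(\frac{q^{N/2}}{N}\right).
\end{equation*}

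For $P \in T_{1}$, we have $\{\{n/P\}\} = \varphi(0) = 1$, so the inner sum $\{\{n/P\}\} + \cdots + \{\{n/P\}\}^{p-1}$ equals $p-1$. For $P \in T_{2}$, the value $\{\{n/P\}\} = \zeta$ is a primitive $p$-th root of unity, hence $\zeta + \zeta^{2} + \cdots + \zeta^{p-1} = -1$. Summing the two contributions gives
\begin{equation*}
\sum_{\deg P = N}\Bigl(\{\{\tfrac{n}{P}\}\} + \cdots + \{\{\tfrac{n}{P}\}\}^{p-1}\Bigr) = (p-1)\#T_{1} - \#T_{2} = O\!\left(\frac{q^{N/2}}{N}\right),
\end{equation*}
and dividing by $\pi(N) = q^{N}/N + O(q^{N/2}/N)$ (Lemma~\ref{Che3}) produces the claimed $o(\pi(N))$ bound.

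There is no real obstacle beyond bookkeeping: the only subtlety is ensuring that for $N$ large enough, no degree-$N$ prime ramifies in $E$, which is immediate since the ramified primes of $E/k$ are finitely many (namely $P_{1},\dots,P_{m}$ from the normalized form of $D$) and thus eventually excluded from the sum. The geometric hypothesis on $E$ is exactly what makes Lemma~\ref{Che2} applicable with the common constant field $\mathbb{F}_{q}$, paralleling the Kummer case.
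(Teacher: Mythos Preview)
Your proof is correct and follows essentially the same approach as the paper: partition the degree-$N$ primes into $T_1$ and $T_2$ according to whether $\{n/P\}=0$, use equation~(\ref{Artin}) to identify these sets with Frobenius conditions, apply Lemma~\ref{Che2} to count each, and combine with Lemma~\ref{Che3}. The paper's argument is line-for-line the Artin--Schreier analogue of its own Lemma~\ref{xiao1} proof, exactly as you anticipated.
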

\begin{proof} Suppose Gal($E/k)=<\sigma>$. If $N$ is big enough, from  equation~(\ref{Artin}), we have
\begin{equation*}
\begin{aligned}
T_{1}:&=\left\{\textrm{deg}P=N~|~\{\{\frac{n}{P}\}\}=1\right\}=\left\{\textrm{deg}P=N~|~\{\frac{n}{P}\}=0\right\}\\&=\{P \in S_{k}~|~\textrm{deg}P=N, (P,E/k)=\rm{id}\},\\
T_{2}:&=\left\{\textrm{deg}P=N~|~\{\{\frac{n}{P}\}\}\neq
1\right\}=\left\{\textrm{deg}P=N~|~\{\frac{n}{P}\}\neq
0\right\}\\&=\{P \in S_{k}~|~\textrm{deg}P=N, (P,E/k)=\sigma^i, p\nmid
i\},
\end{aligned}
\end{equation*}
where $S_{k}$  is the set of monic irreducible polynomials which are
unramified  in $E$. By Chebotarev's Density Theorem (Lemma
\ref{Che2}), we have
\begin{equation*}
\begin{aligned}
\#T_{1}=&\frac{1}{p}\frac{q^{N}}{N}+
O\left(\frac{q^{N/2}}{N}\right);\\
\#T_{2}=&\frac{p-1}{p}\frac{q^{N}}{N}+
O\left(\frac{q^{N/2}}{N}\right).
\end{aligned}
\end{equation*}

Thus \begin{equation*}\begin{aligned} &\sum_{\substack{P\in T_{1}}
}\left(\{\{\frac{n}{P}\}\}+\{\{\frac{n}{P}\}\}^2+\cdots+\{\{\frac{n}{P}\}\}^{p-1}\right)\\=&\sum_{\substack{P\in
T_{1}}
}(1+1^2+\cdots+1^{p-1})=(p-1)\#T_{1}\\=&\frac{p-1}{p}\frac{q^{N}}{N}+
O\left(\frac{q^{N/2}}{N}\right);\\
&\sum_{\substack{P\in T_{2}}
}\left(\{\{\frac{n}{P}\}\}+\{\{\frac{n}{P}\}\}^2+\cdots+\{\{\frac{n}{P}\}\}^{p-1}\right)\\=&\sum_{\substack{P\in
T_{2}}
}(\zeta_{p}+\zeta_{p}^2+\cdots+\zeta_{p}^{p-1})=(-1)\#T_{2}\\=&-\frac{p-1}{p}\frac{q^{N}}{N}+
O\left(\frac{q^{N/2}}{N}\right),
\end{aligned}\end{equation*}
where $\zeta_{p}$ is some primitive $p$-th root of unity. Therefore,
\begin{equation*}\begin{aligned}
&\sum_{\textrm{deg}P=N}\left(\{\{\frac{n}{P}\}\}+\cdots+\{\{\frac{n}{P}\}\}^{p-1}\right)\\=&\sum_{P\in
T_{1}}\left(\{\{\frac{n}{P}\}\}+\cdots+\{\{\frac{n}{P}\}\}^{p-1}\right)+\sum_{P\in
T_{2}}\left(\{\{\frac{n}{P}\}\}+\cdots+\{\{\frac{n}{P}\}\}^{p-1}\right)\\=&O\left(\frac{q^{N/2}}{N}\right)
\end{aligned}\end{equation*}
 when $N$ is big
enough. Thus from the prime number theory for polynomials (Lemma
\ref{Che3}), we have
$$\sum_{\textrm{deg}P=N}\left(\{\{\frac{n}{P}\}\}+\{\{\frac{n}{P}\}\}^2+\cdots+\{\{\frac{n}{P}\}\}^{p-1}\right)=o(\pi(N)).$$

\end{proof}

\subsection{Main result}

In this subsection we state and prove our main result.

Let $\gamma_S$ be the cardinality of the following set
$$\{(a_1,a_2\cdots,a_l)\in \mathbb{F}_{p}^l~|~a_{1}D_{1}+a_{2}D_{2}+\cdots
+a_{l}D_{l}=F^p-F\ \mathrm{for\ some}\ F\in k \}.$$

We will prove the following result.

\begin{theorem}~\label{AT1}
For a given finite set $S$ of nonconstant elements of $k=\mathbb{F}_q(t)$ with $|S|=l$, we have $$[K:k]=p^{l-r},$$ where $r$ is the non-negative integer given by $p^{r}=\gamma_S$.
\end{theorem}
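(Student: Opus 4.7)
The plan is to mirror the proof of Theorem~\ref{Kummer} in the Artin-Schreier setting, with the Hasse symbol $\{\{\cdot/P\}\}$ playing the role of the $m$-th power residue symbol. The key structural feature that keeps the argument formally parallel is that $\{\{\cdot/P\}\}$ is \emph{additively} multiplicative by equation~(\ref{Mul}), so that
$$\prod_{i=1}^{l}\{\{D_i/P\}\}^{b_i}=\{\{n/P\}\}\quad\text{where}\quad n=b_1D_1+\cdots+b_lD_l.$$
This converts products over $i$ into a single symbol evaluation at an $\mathbb{F}_p$-linear combination of the $D_i$, exactly analogous to the multiplicative case $n=\prod D_i^{b_i}$ in the Kummer proof.

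First, a short preliminary lemma gives $\gamma_S=p^r$ for some integer $r\leq l$: the map $\mathbb{F}_p^l\to k/\mathcal{P}k$, $(a_1,\ldots,a_l)\mapsto \sum_i a_iD_i+\mathcal{P}k$, is $\mathbb{F}_p$-linear, and $\gamma_S$ is the cardinality of its kernel. Next I would prove the Artin-Schreier analogue of Theorem~\ref{Kummer2}: let $\mathscr{P}(S)$ be the finite set of poles of the $D_i$ together with primes at which $\{\cdot/P\}$ is ill-defined, and set
$$\mathscr{M}=\left\{P\notin\mathscr{P}(S)~|~\{D_1/P\}=\cdots=\{D_l/P\}=0\right\}.$$
Then $\mathscr{M}$ has relative density $\gamma_S/p^l$. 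To establish this, introduce
$$R_N=\frac{1}{p^l}\sum_{\substack{\deg P=N\\ P\notin\mathscr{P}(S)}}\prod_{i=1}^{l}\bigl(1+\{\{D_i/P\}\}+\cdots+\{\{D_i/P\}\}^{p-1}\bigr),$$
whose inner product equals $p^l$ when $P\in\mathscr{M}$ and $0$ otherwise, so $R_N$ literally counts the degree-$N$ primes in $\mathscr{M}$. Expanding and applying additive multiplicativity gives
$$R_N=\frac{1}{p^l}\sum_{\substack{\deg P=N\\ P\notin\mathscr{P}(S)}}\sum_{(b_1,\ldots,b_l)\in\mathbb{F}_p^l}\{\{n/P\}\}.$$

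Now split the $b$-sum into two pieces. For the $\gamma_S$ tuples with $n\in\mathcal{P}k$, one has $\{\{n/P\}\}=1$ for every $P\notin\mathscr{P}(S)$, contributing a main term of $\gamma_S\pi(N)/p^l$ via Lemma~\ref{Che3}. For the remaining tuples, $k(\mathcal{P}^{-1}(n))/k$ is a nontrivial degree-$p$ cyclic extension contained in $K$, hence geometric by the standing hypothesis; I would apply the rewriting trick of the Kummer proof, replacing $\{\{n/P\}\}$ by $\tfrac{1}{p-1}\sum_{j=1}^{p-1}\{\{n/P\}\}^j$ (legitimate because $b\mapsto jb$ is a bijection of $\mathbb{F}_p^l$ preserving membership in $\mathcal{P}k$), and then invoke Lemma~\ref{xiao2} to show each such $n$ contributes $o(\pi(N))$. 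Combining the pieces gives $R_N/\pi(N)\to \gamma_S/p^l$. Finally, by equation~(\ref{Artin}), $\{D_i/P\}=0$ is equivalent to $(P,k(\alpha_i)/k)=\mathrm{id}$, so $\mathscr{M}$ is precisely the set of primes splitting completely in $K$; Lemma~\ref{Che2} gives its relative density as $1/[K:k]$, and comparing with $\gamma_S/p^l=1/p^{l-r}$ forces $[K:k]=p^{l-r}$.

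The main obstacle is not any single deep step but rather the translation of the multiplicative bookkeeping of the Kummer proof into the additive Artin-Schreier language. In particular one must check that for $n\notin\mathcal{P}k$ the extension $k(\mathcal{P}^{-1}(n))/k$ is genuinely of degree $p$ and geometric so that Lemma~\ref{xiao2} applies, and that the substitution $b\mapsto jb$ preserves the partition of $\mathbb{F}_p^l$ into tuples with $n\in\mathcal{P}k$ and tuples without. Both facts are immediate: $\mathcal{P}k$ is an $\mathbb{F}_p$-subspace of $k$, and $k(\mathcal{P}^{-1}(n))\subset K$ automatically inherits geometricity. Once these verifications are made, the argument runs in complete formal parallel with the Kummer case.
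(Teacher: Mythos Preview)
Your proposal is correct and follows essentially the same route as the paper: the paper also proves the preliminary lemma on $\gamma_S=p^r$ via the linear map into $k/\mathcal{P}k$, establishes the relative density $\gamma_S/p^l$ for $\mathscr{M}$ by expanding the same counting function $R_N$ with the $\tfrac{1}{p-1}\sum_j$ trick and Lemma~\ref{xiao2}, and then compares with Chebotarev (Lemma~\ref{Che2}) to read off $[K:k]$. The only cosmetic difference is that the paper applies the $\tfrac{1}{p-1}$ averaging uniformly before splitting the $b$-sum, whereas you split first and average only on the nontrivial piece; both are equivalent since, as you note, $b\mapsto jb$ preserves the partition.
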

\begin{lemma}
We have $\gamma_{S}=p^{r}$ for some $r\leq l$.
\end{lemma}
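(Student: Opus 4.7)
The plan is to exhibit $\gamma_S$ as the order of an $\mathbb{F}_p$-subspace of $\mathbb{F}_p^l$, from which the conclusion is immediate. Set
$$G_S := \left\{(a_1,\ldots,a_l)\in \mathbb{F}_p^l ~\Big|~ a_1 D_1+\cdots+a_l D_l \in \mathcal{P}k\right\},$$
so that $\gamma_S = \#G_S$. The claim reduces to showing $G_S$ is a subgroup of $(\mathbb{F}_p^l,+)$; since a subgroup of an elementary abelian $p$-group is automatically an $\mathbb{F}_p$-subspace (scalar multiplication by $c\in\mathbb{F}_p$ is iterated addition), its order is a power of $p$ bounded by $p^l$.

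First I would check that $G_S$ is closed under addition, which is the additive analogue of the argument used in Lemma 2.1 of \cite{Luca}. If $(a_1,\ldots,a_l),(b_1,\ldots,b_l)\in G_S$, with $\sum a_i D_i = F^p - F$ and $\sum b_i D_i = G^p - G$ for some $F,G\in k$, then using additivity of the Artin--Schreier operator $\mathcal{P}$,
$$\sum_{i=1}^{l}(a_i+b_i)D_i = (F^p-F)+(G^p-G) = (F+G)^p-(F+G)\in \mathcal{P}k.$$
The zero tuple lies in $G_S$ since $0=0^p-0$, and inverses are handled identically by replacing $F$ with $-F$. Hence $G_S\leq \mathbb{F}_p^l$.

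Second, since $G_S$ is an additive subgroup of $\mathbb{F}_p^l$ and $c\cdot v = v+\cdots+v$ ($c$ times) for any $c\in\mathbb{F}_p$, $G_S$ is closed under $\mathbb{F}_p$-scalar multiplication, so it is an $\mathbb{F}_p$-subspace. Therefore $\gamma_S=\#G_S=p^r$ for some integer $0\leq r\leq l$.

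There is essentially no obstacle here; the only point requiring care is the passage from the multiplicative setting of Kummer theory (where one works modulo the subgroup $k^{*m}$ of $k^*$) to the additive Artin--Schreier setting (where one works modulo the subgroup $\mathcal{P}k$ of $k$), which is precisely what makes the closure calculation above work out with $F+G$ in place of $FG$.
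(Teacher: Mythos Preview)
Your proof is correct and takes essentially the same approach as the paper: both arguments identify the set $G_S$ as an $\mathbb{F}_p$-subspace of $\mathbb{F}_p^l$, whence $\gamma_S=p^r$ with $r\le l$. The only cosmetic difference is that the paper packages this by defining the $\mathbb{F}_p$-linear map $\tau:\mathbb{F}_p^l\to k/\mathcal{P}k$, $v_i\mapsto D_i\bmod \mathcal{P}k$, and observing $G_S=\ker\tau$, whereas you verify the subspace axioms by hand via additivity of $\mathcal{P}$.
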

\begin{proof}
We extend the proof of Lemma 2.1 in~\cite{Luca} to our case. Let
$V=\mathbb{F}_{p}^{l}$ be the vector space having
$v_{1},...,v_{l}$ as a basis. Let $W=k/\mathcal{P}k$. Then $W$ is a
$\mathbb{F}_{p}$-vector space. Let $\tau:V\rightarrow W$ be
given by $\tau(v_{i})=D_{i}$ (mod $\mathcal{P}k$) and extended by
linearly. It is then clear that $(a_1,a_2\cdots,a_l)\in \mathbb{F}_{p}^l$ satisfies $a_{1}D_{1}+a_{2}D_{2}+\cdots
+a_{l}D_{l}\in\mathcal{P}k$ if and only if
$(a_1,a_2\cdots,a_l)\in\textrm{ker}(\tau)$. Thus
$\gamma_{S}=p^{r}$, where $r$ is the dimension of
ker($\tau$).
\end{proof}

\begin{theorem}~\label{AT2}
Let  $$\mathscr{M}:=\left\{P~|~\{\frac{D_{1}}{P}\}=...=\{\frac{D_{l}}{P}\}=0\right\}.$$ The relative density of $\mathscr{M}$ equals to
$$\frac{\gamma_{S}}{p^{l}}.$$
\end{theorem}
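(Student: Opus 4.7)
The plan is to mirror the proof of Theorem~\ref{Kummer2}, replacing the multiplicative power residue character by the additive surrogate $\{\{\tfrac{\cdot}{P}\}\}$, which takes values in the group of $p$-th roots of unity. Let $\mathscr{P}(S)$ denote the finite set of monic irreducible polynomials appearing as a pole of any of the $D_i$, so that every Hasse symbol $\{\tfrac{D_i}{P}\}$ is well-defined once $P\notin\mathscr{P}(S)$. Introduce the counting function
$$R_N=\frac{1}{p^l}\sum_{\substack{\deg P=N\\ P\notin\mathscr{P}(S)}}\prod_{i=1}^{l}\left(1+\{\{\tfrac{D_i}{P}\}\}+\{\{\tfrac{D_i}{P}\}\}^2+\cdots+\{\{\tfrac{D_i}{P}\}\}^{p-1}\right).$$
Since $1+\zeta+\cdots+\zeta^{p-1}$ equals $p$ when $\zeta=1$ and equals $0$ for every nontrivial $p$-th root of unity, the product equals $p^l$ precisely when every $\{\tfrac{D_i}{P}\}$ vanishes and is $0$ otherwise. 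Hence $R_N$ is the number of degree-$N$ primes in $\mathscr{M}\setminus\mathscr{P}(S)$, which differs from $\#\{P\in\mathscr{M}\colon\deg P=N\}$ by at most $|\mathscr{P}(S)|=O(1)$.

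Next, expand the product and apply the additivity/multiplicativity relation~(\ref{Mul}), namely $\prod_i\{\{\tfrac{D_i}{P}\}\}^{a_i}=\{\{\tfrac{a_1 D_1+\cdots+a_l D_l}{P}\}\}$, to rewrite
$$R_N=\frac{1}{p^l}\sum_{(a_1,\ldots,a_l)\in\mathbb{F}_p^l}\sum_{\substack{\deg P=N\\ P\notin\mathscr{P}(S)}}\{\{\tfrac{n}{P}\}\},\qquad n=a_1 D_1+\cdots+a_l D_l.$$
To symmetrize the inner character, observe that $(a_1,\ldots,a_l)\mapsto(ca_1,\ldots,ca_l)$ is a bijection of $\mathbb{F}_p^l$ for each $c\in\mathbb{F}_p^\ast$, which sends $n$ to $cn$, while $\{\{\tfrac{cn}{P}\}\}=\{\{\tfrac{n}{P}\}\}^c$. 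Averaging over $c=1,\ldots,p-1$ therefore replaces $\{\{\tfrac{n}{P}\}\}$ in the inner sum by $\tfrac{1}{p-1}\sum_{c=1}^{p-1}\{\{\tfrac{n}{P}\}\}^c$, exactly as in the Kummer computation.

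Finally, split the outer sum according to whether $n\in\mathcal{P}k$. If $n\in\mathcal{P}k$, then $k(\mathcal{P}^{-1}(n))=k$, so~(\ref{Artin}) forces $\{\tfrac{n}{P}\}=0$ for every $P\notin\mathscr{P}(S)$; the inner sum equals $(p-1)\pi(N)+O(1)$, and since there are exactly $\gamma_S$ such tuples by definition, this range contributes $\tfrac{\gamma_S}{p^l}\pi(N)+O(1)$. If $n\notin\mathcal{P}k$, then $k(\mathcal{P}^{-1}(n))/k$ is a nontrivial degree-$p$ Artin-Schreier extension; it is geometric because it sits inside the geometric extension $K/k$, so Lemma~\ref{xiao2} bounds the inner sum by $o(\pi(N))$, and summing over the at most $p^l$ tuples of this type still gives $o(\pi(N))$. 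Combining, $R_N=\tfrac{\gamma_S}{p^l}\pi(N)+o(\pi(N))$, so dividing by $\pi(N)$ and letting $N\to\infty$ yields $\varepsilon(\mathscr{M})=\gamma_S/p^l$. The step I expect to be most delicate is the geometricity of each $k(\mathcal{P}^{-1}(n))/k$ that appears, which I handle by noting that the full constant field of any subfield of $K$ is contained in the full constant field of $K$, namely $\mathbb{F}_q$.
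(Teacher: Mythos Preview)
Your proof is correct and follows essentially the same route as the paper's own argument: the same counting function $R_N$, the same expansion via~(\ref{Mul}) into a sum over $\mathbb{F}_p^l$, the same symmetrization over $c\in\mathbb{F}_p^\ast$, and the same dichotomy $n\in\mathcal{P}k$ versus $n\notin\mathcal{P}k$ handled by Lemma~\ref{xiao2}. If anything, you are slightly more explicit than the paper in justifying why $R_N$ actually counts the primes in $\mathscr{M}$ and in verifying that $k(\mathcal{P}^{-1}(n))\subset K$ inherits geometricity.
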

\begin{proof}
In fact $$\begin{aligned}\mathscr{M}:&=\left\{P~|~\{\frac{D_{1}}{P}\}=...=\{\frac{D_{l}}{P}\}=0\right\}\\&=\left\{P~|~\{\{\frac{D_{1}}{P}\}\}=...=\{\{\frac{D_{l}}{P}\}\}=1\right\}.\end{aligned}$$
 Let
$\mathscr{P}(S)=\cup_{i=1}^{l}S(D_{i})$, where $S(D_{i})$ is defined as the set of prime factors of the denominator of $D_i$. Clearly, $\mathscr{P}(S)$ is a finite set. Let $N$ be a positive integer. Considering the following counting
function:
$$R_{N}=\frac{1}{p^{l}}\sum_{\substack{\textrm{deg}P=N \\P\not\in\mathscr{P}(S)}}\left(1+\{\{\frac{D_{1}}{P}\}\}+\cdots+\{\{\frac{D_{1}}{P}\}\}^{p-1}\right)...\left(1+\{\{\frac{D_{l}}{P}\}\}+\cdots+\{\{\frac{D_{l}}{P}\}\}^{p-1}\right).$$
From equation~(\ref{Mul}), we have
\begin{equation*}\begin{aligned}
R_{N}&=\frac{1}{p^{l}}\sum_{\substack{\textrm{deg}P=N\\P\not\in\mathscr{P}(S)}}\sum_{\substack{
(b_{1},...,b_{l})\in
\mathbb{F}_{p}^l\\n=b_{1}D_{1}...+ b_{l}D_{l}}}\{\{\frac{n}{P}\}\}\\
&=\frac{1}{p^{l}}\sum_{\substack{\textrm{deg}P=N\\P\not\in\mathscr{P}(S)}}
\frac{1}{p-1}\biggl(\sum_{\substack{ (b_{1},...,b_{l})\in
\mathbb{F}_{p}^l\\n=b_{1}D_{1}+...+ b_{l}D_{l} }}
\{\{\frac{n}{P}\}\}+\sum_{\substack{ (b_{1},...,b_{l})\in
\mathbb{F}_{p}^l\\n=b_{1}D_{1}+...+ b_{l}D_{l} }}
\{\{\frac{2n}{P}\}\}+\cdots\\&\quad+\sum_{\substack{ (b_{1},...,b_{l})\in
\mathbb{F}_{p}^l\\n=b_{1}D_{1}+...+ b_{l}D_{l} }}\{\{\frac{(p-1)n}{P}\}\}\biggl)
\\&=
\frac{1}{p^{l}}\sum_{\substack{\textrm{deg}P=N\\P\not\in\mathscr{P}(S)}}\frac{1}{p-1}\sum_{\substack{
(b_{1},...,b_{l})\in
\mathbb{F}_{p}^l\\n=b_{1}D_{1}+...+ b_{l}D_{l} }}
\left(\{\{\frac{n}{P}\}\}+\cdots+\{\{\frac{n}{P}\}\}^{p-1}\right)
\\&=
\sum_{\substack{ (b_{1},...,b_{l})\in
\mathbb{F}_{p}^l\\n=b_{1}D_{1}+...+ b_{l}D_{l} }}\frac{1}{p^{l}}\sum_{\substack{\textrm{deg}P=N\\P\not\in\mathscr{P}(S)}}
\left(\{\{\frac{n}{P}\}\}+\cdots+\{\{\frac{n}{P}\}\}^{p-1}\right)\left(\frac{1}{p-1}\right).
\end{aligned}\end{equation*}
If $n\in\mathcal{P}k$, then $\{\{\frac{n}{P}\}\}=1$
for each $P\not\in\mathscr{P}(S)$. Thus, for these $\gamma_{S}$
values of $n$, the inner sum is
$$\frac{1}{p^{l}}\sum_{\substack{\textrm{deg}P=N\\P\not\in\mathscr{P}(S)}}
\left(\{\{\frac{n}{P}\}\}+\cdots+\{\{
\frac{n}{P}\}\}^{p-1}\right)
\left(\frac{1}{p-1}\right)=\frac{1}{p^{l}}\pi(N),$$ if $N$ is large
enough. For the remanning values of $n$ (i.e. $n\not\in\mathscr{P}k$). From
our assumption in this paper, $K/k$ is a geometric extension. we
have  $k(\mathcal{P}^{-1}(n))/k$ is also a geometric
extension. Thus by Lemma ~\ref{xiao2}, we have
$$\frac{1}{p^{l}}\sum_{\substack{\textrm{deg}P=N\\P\not\in\mathscr{P}(S)}}
\left(\{\{\frac{n}{P}\}\}+\cdots+\{\{\frac{n}{P}\}\}^{p-1}\right)
\left(\frac{1}{p-1}\right)=o(\pi(N)),$$
 Therefore, if $N$ is large enough,
$$R_{N}=\frac{\gamma_{S}}{p^{l}}\pi(N)+o(\pi(N))$$
and hence
$$\frac{R_{N}}{\pi(N)}=\frac{\gamma_{S}}{p^{l}}+o(1).$$
So we have
$$\lim\limits_{N\to\infty}\frac{R_{N}}{\pi(N)}=\frac{\gamma_{S}}{p^{l}}.$$
This concludes the proof.
\end{proof}

Now we can proof the main result in this section.

Proof of Theorm~\ref{AT1}:
Let  $$f(x)=(x^{p}-x-D_{1})(x^{p}-x-D_{2})...(x^{p}-x-D_{l})\in k[x],$$
then $K/k$ is the splitting field of $f(x)$. Let $S_{k}$  be the set of monic irreducible polynomials which are unramified  in
$K$
and  $$\mathscr{M}:=\left\{P\in S_{k}~|~\{\frac{D_{1}}{P}\}=...=\{\frac{D_{l}}{P}\}=0\right\}.$$
By Theorem~\ref{AT2}, we know that the relative density of $\mathscr{M}$ is
\begin{equation}~\label{A1}\frac{\gamma_{S}}{p^{l}}=\frac{1}{p^{l-r}}.\end{equation}
Let $\sigma_{P}=(P,K/k)\in\textrm{Gal}(K/k)$. Since $P\in \mathscr{M}$,
Hasse symbol
$\{\frac{D_{i}}{P}\}=0$, hence from equation~(\ref{Artin}),
$\sigma_{P}$ restricted to $k(\alpha_{i})$ is the identity, where
$$\alpha_{i}^{p}-\alpha_{i}=D_{i},$$ for $1\leq i\leq l$. Suppose
$[K:k]=p^{t}$. For our assumption, $K$ is a geometric extension of
$k$. By the Chebotarev's Density Theorem (Lemma~\ref{Che2}) and the
prime number theorem for polynomials (Lemma~\ref{Che3}), the
relative density of $\mathscr{M}$ is
\begin{equation}~\label{A2}\frac{1}{[K:k]}=\frac{1}{p^{t}}.\end{equation}
By comparing equations (\ref{A1}) and (\ref{A2}), we get $t=l-r$.
This finishes our proof.
\section{Another approach}
In this section, using abelian Kummer theory instead of Lemma \ref{xiao1} and
Lemma \ref{xiao2} we give another approach to this problem. Notice that in this section we do not assume $K/k$ is a geometric extension.

\subsection{Multi-Kummer case:} Let $m$ be any prime divisor of $q-1$. Let $K$ be a
multi-$m$-cyclic extension of $k=\mathbb{F}_{q}(t)$. That is
$K=k(\sqrt[m]{D_{1}},...,\sqrt[m]{D_{l}})$ and
$S=\{D_{1},...,D_{l}\}$ is a finite set of nonzero polynomials
in $A=\mathbb{F}_{q}[t]$.

Let $\mathbb{Z}_m$ be the set of integers
$$\mathbb{Z}_m=\{0,1,2\cdots,m-2,m-1\} .$$ Let
$\gamma_S$ be the cardinality of the following set
$$\{(a_1,a_2\cdots,a_l)\in \mathbb{Z}_m^l~|~D_{1}^{a_{1}}D_{2}^{a_2}\cdots
D_{l}^{a_l}=F^m\ \mathrm{for\ some}\ F\in\mathbb{F}_{q}[t] \},$$  We have

\begin{theorem}~\label{Kummer3}
$$[K:k]=m^{l-r},$$ where $r$ is the non-negative integer given by
$m^{r}=\gamma_{S}$.
\end{theorem}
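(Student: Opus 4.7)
The plan is to bypass the character-sum machinery by invoking the Kummer correspondence directly. Since $m$ is a prime divisor of $q-1$, the field $\mathbb{F}_q$, and hence $k=\mathbb{F}_q(t)$, contains a primitive $m$-th root of unity. Classical Kummer theory (see e.g.\ Lang, \emph{Algebra}, Ch.\ VI) then gives a bijection between subgroups $B$ of $k^{*}/k^{*m}$ and abelian extensions $L/k$ of exponent dividing $m$, under which $L=k(\sqrt[m]{B})$ and $[L:k]=\#B$. Note that this statement is intrinsic to $k$ and does not require $L/k$ to be geometric, which is why the geometricity hypothesis can be dropped in this section.

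First I would let $B$ be the subgroup of $k^{*}/k^{*m}$ generated by the classes of $D_1,\ldots,D_l$. By the Kummer correspondence, $K=k(\sqrt[m]{D_1},\ldots,\sqrt[m]{D_l})=k(\sqrt[m]{B})$ and $[K:k]=\#B$. Since $m$ is prime, $k^{*}/k^{*m}$ is an $\mathbb{F}_m$-vector space, and $B$ is the image of the $\mathbb{F}_m$-linear map
\[
\varphi\colon \mathbb{F}_m^l \longrightarrow k^{*}/k^{*m},\qquad (a_1,\ldots,a_l)\longmapsto D_1^{a_1}\cdots D_l^{a_l}\pmod{k^{*m}}.
\]
Hence $\#B=m^{l}/\#\ker(\varphi)$, and the theorem will follow as soon as I identify $\#\ker(\varphi)$ with $\gamma_S$.

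The only subtle point is that $\gamma_S$ is defined via representations $D_1^{a_1}\cdots D_l^{a_l}=F^m$ with $F\in\mathbb{F}_q[t]$, whereas $\ker(\varphi)$ \emph{a priori} allows $F\in k^{*}=\mathbb{F}_q(t)^{*}$. I would settle this with the UFD property of $\mathbb{F}_q[t]$: if $\prod_i D_i^{a_i}=(G_1/G_2)^m$ with coprime $G_1,G_2\in\mathbb{F}_q[t]$, then $G_1^m=G_2^m\prod_i D_i^{a_i}$, and since $\gcd(G_1^m,G_2^m)=1$ the polynomial $G_2^m$ must be a unit, so $G_2\in\mathbb{F}_q^{*}$ and $F:=G_1/G_2\in\mathbb{F}_q[t]$. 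Thus $\ker(\varphi)$ is exactly the set counted by $\gamma_S$, and writing $\gamma_S=m^r$ (as proved in the previous lemma, whose argument carries over verbatim with $\mathbb{Q}^{*}/\mathbb{Q}^{*2}$ replaced by $k^{*}/k^{*m}$) we obtain
\[
[K:k]=\#B=\frac{m^{l}}{\gamma_S}=m^{l-r}.
\]

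The only real obstacle is the polynomial-versus-rational-function comparison above; everything else is a direct application of Kummer theory. The same strategy will work for Theorem~\ref{AT1} in the Artin--Schreier case, replacing $k^{*}/k^{*m}$ by the additive group $k/\mathcal{P}k$ and Kummer theory by Artin--Schreier theory.
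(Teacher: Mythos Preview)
Your proposal is correct and follows essentially the same approach as the paper: both apply the Kummer correspondence from Lang's \emph{Algebra}, Chapter~VI, to the subgroup generated by the $D_i$ in $k^{*}/k^{*m}$ and read off $[K:k]=[B:k^{*m}]=m^{l-r}$. Your write-up is in fact more careful than the paper's three-line proof, since you explicitly justify (via the UFD property of $\mathbb{F}_q[t]$) that an $m$-th power in $k^{*}$ whose base lies in $\mathbb{F}_q[t]$ is already an $m$-th power of a polynomial, thereby matching $\ker(\varphi)$ with the set defining $\gamma_S$; the paper simply asserts $[B:k^{m}]=m^{l-r}$ ``from the definition of $S$'' without addressing this point.
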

\begin{proof}Let $B$ be a subgroup of $k^{*}$ generated by $k^{m}$
and $S$. From Chapter VI, Theorem 8.1 in~\cite{Lang} and the definition of $S$, we have
$$[K:k] = [k(B^{\frac{1}{m}}):k] = [B:k^{m}] = m^{l-r}.$$
\end{proof}

\begin{theorem}~\label{Kummer4}
Let $$\mathscr{M}:=\left\{P~|~\left(\frac{D_{1}}{P}\right)_{m}=...=\left(\frac{D_{l}}{P}\right)_{m}=1\right\}.$$ The Dirichlet density of $\mathscr{M}$ equals to
$1/m^{l-r},$ where $r$ is the non-negative integer given by
$m^{r}=\gamma_{S}$. In particular, if $k/k$ is a geometric extension, then the relative density of $\mathscr{M}$ also equals to
$1/m^{l-r}.$
\end{theorem}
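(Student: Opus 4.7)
The plan is to reduce the statement to Chebotarev's Density Theorem applied to the compositum $K=k(\sqrt[m]{D_1},\ldots,\sqrt[m]{D_l})$, using Theorem~\ref{Kummer3} to compute $[K:k]$.

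First I would rewrite the condition defining $\mathscr{M}$ in Galois-theoretic terms. For a monic irreducible $P\in A$ that is unramified in $K$ and coprime to all the $D_i$, the classical equivalence between the $m$-th power residue symbol and splitting in a cyclic Kummer extension gives $\left(\frac{D_i}{P}\right)_m=1$ if and only if $D_i$ is an $m$-th power modulo $P$, which is in turn equivalent to $P$ splitting completely in $k(\sqrt[m]{D_i})$. Running over $i=1,\ldots,l$ and using that $K$ is the compositum of these cyclic sub-extensions, we obtain $P\in\mathscr{M}$ if and only if $P$ splits completely in $K$, i.e.\ the Artin symbol $(P,K/k)$ is the identity.

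Next, I would discard the finitely many primes that are ramified in $K$ or divide some $D_i$; since a finite set contributes nothing to density, $\mathscr{M}$ agrees up to a density-zero set with $\{P\in S_k\mid (P,K/k)=\mathrm{id}\}$. Applying Chebotarev's Density Theorem (Lemma~\ref{Che}) with the trivial conjugacy class yields
\[
\delta(\mathscr{M}) \;=\; \frac{1}{[K:k]},
\]
and Theorem~\ref{Kummer3} identifies $[K:k]=m^{l-r}$, giving the asserted Dirichlet density $1/m^{l-r}$.

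For the last assertion (the geometric case, where $K/k$ is a geometric extension), I would replace Lemma~\ref{Che} by the effective version Lemma~\ref{Che2}, obtaining
\[
\#\{P\in S_k\mid \deg P=N,\ (P,K/k)=\mathrm{id}\} \;=\; \frac{1}{m^{l-r}}\cdot\frac{q^N}{N} + O\!\left(\frac{q^{N/2}}{N}\right),
\]
then divide by $\pi(N)=q^N/N+O(q^{N/2}/N)$ supplied by Lemma~\ref{Che3}, and let $N\to\infty$ to read off the relative density $1/m^{l-r}$. I do not expect a genuine obstacle: the substantive input is the identification of $\mathscr{M}$ with the set of primes splitting completely in $K$, which is standard Kummer theory, while the computation $[K:k]=m^{l-r}$ has already been carried out in Theorem~\ref{Kummer3} via the abelian Kummer pairing, entirely bypassing the character sum estimate of Lemma~\ref{xiao1}.
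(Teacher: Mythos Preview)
Your proposal is correct and follows essentially the same route as the paper: identify $\mathscr{M}$ (up to a finite set) with the primes whose Artin symbol in $\mathrm{Gal}(K/k)$ is trivial via the splitting criterion for $k(\sqrt[m]{D_i})$, invoke Theorem~\ref{Kummer3} for $[K:k]=m^{l-r}$, and then apply Lemma~\ref{Che} (respectively Lemmas~\ref{Che2} and~\ref{Che3} in the geometric case). The paper's proof is slightly terser but structurally identical.
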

\begin{proof}
Let $\sigma_{P}=(P,K/k)\in\textrm{Gal}(K/k)$. Since $P\in\mathscr{M}$,
$D_{i}$ is a $m$th power residue modulo $P$ and hence $P$ splits
completely in $k(\sqrt[m]{D_{i}})$ from  Proposition 10.6 in~\cite{Ro}. Therefore $\sigma_{P}$ restricted
to $k(\sqrt[m]{D_{i}})$ is the identity for $1\leq i\leq l$. From Theorem~\ref{Kummer3}, we have
$[K:k]=m^{l-r}$. By Chebotarev's Density Theorem (Lemma~\ref{Che}), the Dirichlet
density of $\mathscr{M}$ equals to $\frac{1}{[K:k]}=\frac{1}{m^{l-r}}.$ If $K/k$ is a geometric extension, then by Chebotarev's Density Theorem (Lemma~\ref{Che2}) and
the prime number theorem for polynomials (Lemma~\ref{Che3}), the relative
density of $\mathscr{M}$ also equals to $\frac{1}{[K:k]}=\frac{1}{m^{l-r}}.$
\end{proof}

\subsection{Multi-Artin-Schreier case:} Let $q$ be a power of prime $p$ and $K$ be a
multi-Artin-Schreier extension of $k=\mathbb{F}_{q}(t)$.
$K=k(\alpha_{1},...,\alpha_{l})$ and there is a finite set
$S=\{D_{1},...,D_{l}\}$ of nonzero elements in
$k$ such that $$\alpha_{i}^{p}-\alpha_{i}=D_{i}
~~(1\leq i\leq l).$$

Let $\gamma_S$ be the cardinality of the following set
$$\{(a_1,a_2\cdots,a_l)\in \mathbb{F}_{p}^l~|~a_{1}D_{1}+a_{2}D_{2}+\cdots
+a_{l}D_{l}=F^p-F\ \mathrm{for\ some}\ F\in k \}.$$ We have

\begin{theorem}~\label{AT3}
$$[K:k]=p^{l-r},$$ where $r$ is the non-negative integer given by
$p^{r}=\gamma_{S}$.
\end{theorem}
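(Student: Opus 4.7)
The plan is to mirror the proof of Theorem~\ref{Kummer3}, with Artin-Schreier theory playing the role that Kummer theory played in the multiplicative case. The relevant duality states that, in characteristic $p$, the additive subgroups $B$ of $k$ with $\mathcal{P}k \subseteq B$ are in inclusion-preserving bijection with the abelian extensions $L/k$ of exponent dividing $p$, via $B \mapsto L = k(\mathcal{P}^{-1}(B))$, and one has $[L:k] = [B:\mathcal{P}k]$. This is the additive analogue of Chapter VI, Theorem~8.1 of~\cite{Lang}, available in the same chapter of Lang's treatment of cyclic $p$-extensions.

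First I would take $B$ to be the additive subgroup of $k$ generated by $\mathcal{P}k$ together with $D_{1},\dots,D_{l}$; equivalently, $B/\mathcal{P}k$ is the $\mathbb{F}_{p}$-span of the classes $\overline{D_{1}},\dots,\overline{D_{l}}$ in $k/\mathcal{P}k$. Since $\alpha_{i} = \mathcal{P}^{-1}(D_{i})$ for each $i$, one has $K = k(\mathcal{P}^{-1}(B))$, and hence by the Artin-Schreier duality above,
$$[K:k] = [B : \mathcal{P}k].$$

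Next I would evaluate the right-hand side using the $\mathbb{F}_{p}$-linear map $\tau$ already introduced in the proof of the preceding lemma, namely
$$\tau : \mathbb{F}_{p}^{l} \longrightarrow k/\mathcal{P}k, \qquad (a_{1},\dots,a_{l}) \longmapsto \sum_{i=1}^{l} a_{i} D_{i} \pmod{\mathcal{P}k}.$$
By construction the image of $\tau$ is precisely $B/\mathcal{P}k$, and by the very definition of $\gamma_{S}$ the kernel of $\tau$ has cardinality $\gamma_{S} = p^{r}$. Therefore
$$[B:\mathcal{P}k] = |B/\mathcal{P}k| = \frac{|\mathbb{F}_{p}^{l}|}{|\ker\tau|} = \frac{p^{l}}{p^{r}} = p^{l-r},$$
which combined with the previous display yields the desired formula $[K:k] = p^{l-r}$.

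The only non-routine step is invoking the correct form of Artin-Schreier theory (the additive-group version of Lang's Kummer duality). Once that is in hand, the remaining computation reduces to the kernel-dimension count already established in the preceding lemma, so no further obstacle arises; in particular, unlike Section~3 this argument makes no use of Chebotarev estimates and hence requires no geometricity hypothesis on $K/k$.
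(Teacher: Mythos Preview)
Your proposal is correct and follows essentially the same route as the paper: define $B$ as the additive subgroup of $k$ generated by $\mathcal{P}k$ and $S$, invoke the Artin--Schreier analogue of Kummer theory (Chapter~VI, Theorem~8.3 of \cite{Lang}) to obtain $[K:k]=[k(\mathcal{P}^{-1}B):k]=[B:\mathcal{P}k]$, and then read off $[B:\mathcal{P}k]=p^{l-r}$ from the definition of $\gamma_S$. Your write-up is more explicit than the paper's in spelling out the last step via the map $\tau$, but the argument is the same.
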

\begin{proof}Let $B$ be a subgroup of $k$ generated by $\mathcal{P}k$
and $S$. From Chapter VI, Theorem 8.3 in~\cite{Lang} and the definition of $S$, we have
$$[K:k] = [k(\mathcal{P}^{-1}B):k] = [B:\mathcal{P}k] = p^{l-r}.$$
\end{proof}
\begin{theorem}~\label{AT4}Let  $$\mathscr{M}:=\left\{P~|~\{\frac{D_{1}}{P}\}=...=\{\frac{D_{l}}{P}\}=0\right\}.$$ The Dirichlet density of $\mathscr{M}$ equals to
$1/p^{l-r},$ where $r$ is the non-negative integer given by
$p^{r}=\gamma_{S}$. In particular, if $k/k$ is a geometric extension, then the relative density of $\mathscr{M}$ also equals to
$1/p^{l-r}.$
\end{theorem}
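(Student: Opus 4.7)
The plan is to mimic the proof of Theorem~\ref{Kummer4}, replacing the role of the power residue symbol by the Hasse symbol and the role of Chapter~VI, Theorem~8.1 of \cite{Lang} by Theorem~\ref{AT3}. The central observation is that, modulo the finite set of places of $k$ that are ramified in $K$, the set $\mathscr{M}$ coincides with the set of primes of $k$ that split completely in $K$. Once this is established, both density statements follow by the Chebotarev machinery already recorded in Lemmas~\ref{Che}, \ref{Che2}, and \ref{Che3}.

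First, I would show that $P \in \mathscr{M}$ (with $P$ unramified in $K$) implies $P$ splits completely in $K$. By the definition of $\mathscr{M}$, $\{D_i/P\} = 0$ for every $1 \leq i \leq l$. By equation~(\ref{Artin}), this means that $(P, k(\alpha_i)/k)\alpha_i = \alpha_i$, so the Artin symbol of $P$ in $k(\alpha_i)/k$ is trivial, and hence $P$ splits completely in each $k(\alpha_i)$. Since $K = k(\alpha_1, \dots, \alpha_l)$ is the compositum of these fields, $P$ splits completely in $K$. Conversely, if $\sigma_P = (P, K/k)$ is trivial, then its restriction to each $k(\alpha_i)$ is trivial, and running (\ref{Artin}) in reverse gives $\{D_i/P\} = 0$, so $P \in \mathscr{M}$. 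Therefore $\mathscr{M}$ and the set of primes splitting completely in $K$ differ only on the finite set of primes of $k$ that are ramified in $K$, which contributes nothing to either the Dirichlet or the relative density.

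Next, Theorem~\ref{AT3} gives $[K:k] = p^{l-r}$. Applying Chebotarev's Density Theorem in the form of Lemma~\ref{Che} with $C = \{\mathrm{id}\}$, we obtain
\[
\delta(\mathscr{M}) = \frac{1}{[K:k]} = \frac{1}{p^{l-r}},
\]
which proves the first assertion. For the second assertion, when $K/k$ is geometric we apply Lemma~\ref{Che2} (again with $C = \{\mathrm{id}\}$) together with the prime number theorem for polynomials (Lemma~\ref{Che3}) to conclude that the relative density of the primes of degree $N$ splitting completely in $K$ equals $1/[K:k] = 1/p^{l-r}$ in the limit.

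The only subtlety, and the step most likely to need care, is the identification $\mathscr{M} = \{P : P \text{ splits completely in } K\}$ up to ramified primes; in particular, one must verify that the equivalence via (\ref{Artin}) is valid precisely for the primes at which each $D_i$ is $P$-integral, which is ensured by excluding the finite set $\mathscr{P}(S)$ of prime factors of the denominators of the $D_i$ together with the ramified primes. Once this bookkeeping is in place, the rest of the argument is a direct transcription of the proof of Theorem~\ref{Kummer4}.
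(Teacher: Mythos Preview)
Your proposal is correct and follows essentially the same route as the paper's proof: identify $\mathscr{M}$ (up to finitely many ramified primes) with the set of primes splitting completely in $K$ via equation~(\ref{Artin}), invoke Theorem~\ref{AT3} for $[K:k]=p^{l-r}$, and then apply Lemmas~\ref{Che}, \ref{Che2}, \ref{Che3}. If anything, you are slightly more careful than the paper, which only spells out the implication $P\in\mathscr{M}\Rightarrow\sigma_P=\mathrm{id}$ and leaves the converse (needed to conclude that $\mathscr{M}$ has density exactly $1/[K:k]$ rather than at most that) implicit.
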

\begin{proof}
The proof is similar to the proof of Theorem~\ref{AT1}.
Let $\sigma_{P}=(P,K/k)\in\textrm{Gal}(K/k)$. Since $P\in\mathscr{M}$,
If $\{\frac{D_{i}}{P}\}=1$, then $P$ splits
completely in $k(\alpha_{i})$ by equation~\ref{Artin}, where  $$\alpha_{i}^{p}-\alpha_{i}=D_{i}
~~(1\leq i\leq l).$$ Therefore $\sigma_{P}$ restricted
to $k(\alpha_{i})$ is the identity for $1\leq i\leq l$. From Theorem~\ref{AT3}, we have
$[K:k]=p^{l-r}$. By Chebotarev's Density Theorem (Lemma~\ref{Che}), the Dirichlet
density of $\mathscr{M}$ equals to $\frac{1}{[K:k]}=\frac{1}{p^{l-r}}.$ If $K/k$ is a geometric extension, then by Chebotarev's Density Theorem (Lemma~\ref{Che2}) and
the prime number theorem for polynomials (Lemma~\ref{Che3}), the relative
density of $\mathscr{M}$ also equals to $\frac{1}{[K:k]}=\frac{1}{m^{l-r}}.$
\end{proof}
\textbf{Acknowledgement:} This work is supported by National Natural
Science Foun- dation of China (Grant No. 11001145 and Grant No.
11071277).

\end{document}